\renewcommand{\epsilon}{\varepsilon}
\theoremstyle{plain}
\newtheorem{thm}{Theorem}[section]
\newtheorem{prop}[thm]{Proposition}
\newtheorem{cor}[thm]{Corollary}
\newtheorem{lem}[thm]{Lemma}
\newtheorem{rem}[thm]{Remark}
\newtheorem*{theorem*}{Theorem}
\newtheorem*{proposition*}{Proposition}
\theoremstyle{definition}
\newtheorem{df}[thm]{Definition}
\theoremstyle{remark}
\newcommand{\Div}{{\rm div}}
\newcommand{\vol}{{\rm Vol}}
\newcommand{\rvol}{{\rm Vol_{\mathbb{R}}}}
\newcommand{\dist}{{\rm dist}}
\newcommand{\area}{{\rm Area}}
\newcommand{\length}{{\rm Length}}
\newcommand{\mvol}{{\rm Vol_M}}
\def\B{{\mathbb B}}
\def\({\left(}
\def\){\right)}
\begin{document}
\begin{title}
{Optimal isoperimetric inequalities for complete proper minimal submanifolds in hyperbolic space}
\end{title}
\begin{author}{Sung-Hong Min \and Keomkyo Seo}\end{author}

\date{\today}

\maketitle

%\vspace{-1cm}
\begin{abstract}
Let $\Sigma$ be a $k$-dimensional complete proper minimal submanifold in the Poincar\'{e} ball model $B^n$ of hyperbolic geometry. If we consider $\Sigma$ as a subset of the unit ball $B^n$ in Euclidean space, we can measure the Euclidean volumes of the given minimal submanifold $\Sigma$ and the ideal boundary $\partial_\infty \Sigma$, say $\rvol(\Sigma)$ and $\rvol(\partial_\infty \Sigma)$, respectively. Using this concept, we prove an optimal linear isoperimetric inequality. We also prove that if $\rvol(\partial_\infty \Sigma) \geq \rvol(\mathbb{S}^{k-1})$, then $\Sigma$ satisfies the classical isoperimetric inequality. By proving the monotonicity theorem for such $\Sigma$, we further obtain a sharp lower bound for the Euclidean volume $\rvol(\Sigma)$, which is an extension of Fraser and Schoen's recent result \cite{FS} to hyperbolic space. Moreover we introduce the M\"{o}bius volume of $\Sigma$ in $B^n$ to prove an isoperimetric inequality via the M\"{o}bius volume for $\Sigma$. \\

\noindent {\it Mathematics Subject Classification(2010)} : 58E35, 49Q05, 53C42. \\
\noindent {\it Key words and phrases} : isoperimetric inequality, minimal submanifold, hyperbolic space, monotonicity, M\"{o}bius volume.

\end{abstract}

%%%%%%%%%%%%%%%%%%%%%%%%%%%%%%%%%%%%%%%%%%%%%%%%
\section{Introduction}
%%%%%%%%%%%%%%%%%%%%%%%%%%%%%%%%%%%%%%%%%%%%%%%%

Let $\Sigma \subset \mathbb{R}^k$ be a domain with smooth boundary $\partial \Sigma$. Then the classical isoperimetric inequality says that
\begin{equation} \label{ineq:classical}
k^k \omega_k \vol(\Sigma)^{k-1} \leq \vol(\partial \Sigma)^k ,
\end{equation}
where equality holds if and only if $\Sigma$ is a ball in $\mathbb{R}^k$. Here $\vol(\Sigma)$ and $\vol(\partial \Sigma)$ denote, respectively, the $k$ and $(k-1)$-dimensional Hausdorff measures, and $\omega_k$ is the volume of the $k$-dimensional unit ball $B^k$. As an extension of this classical isoperimetric inequality for domains in Euclidean space, it is conjectured that the inequality (\ref{ineq:classical}) holds for any $k$-dimensional compact minimal submanifold $\Sigma$ of $\mathbb{R}^n$. In 1921, Carleman \cite{Carleman} gave the first partial proof of the conjecture. He proved the isoperimetric inequality for simply connected minimal surfaces in $\mathbb{R}^n$ by using complex function theory. Osserman and Schiffer \cite{OS} proved in 1975 that the isoperimetric inequality holds for doubly connected minimal surfaces in $\mathbb{R}^3$. Two years later Feinberg \cite{Feinberg} generalized to doubly connected minimal surfaces in $\mathbb{R}^n$ for any dimension $n$. In 1984, Li, Schoen, and Yau \cite{LSY} proved that any minimal surface in $\mathbb{R}^3$ with two boundary components (not necessarily doubly connected) satisfies the isoperimetric inequality. Later, Choe \cite{Choe90} extended this inequality to any minimal surface in $\mathbb{R}^n$ with two boundary components. For higher-dimensional minimal submanifolds, the conjecture was proved only for area-minimizing cases, which was due to Almgren \cite{Almgren}.

In hyperbolic space, there is also a sharp isoperimetric inequality for minimal surfaces. In 1992, Choe and Gulliver \cite{CG1} showed that any minimal surface $\Sigma$ with two boundary components in hyperbolic space $\mathbb{H}^n$ satisfies the sharp isoperimetric inequality
\begin{equation} \label{ineq:isop for min surf in H}
4\pi \area (\Sigma) \leq \length (\partial \Sigma)^2 - \area (\Sigma)^2
\end{equation}
with equality if and only if $\Sigma$ is a geodesic ball in a totally geodesic $2$-plane in $\mathbb{H}^n$. However, there are a few results for higher-dimensional minimal submanifolds in hyperbolic space. Yau \cite{Yau}, Choe and Gulliver \cite{CG2} obtained that if $\Sigma$ is a domain in $\mathbb{H}^k$ or a $k$-dimensional minimal submanifold of $\mathbb{H}^n$, then it satisfies the following linear isoperimetric inequality:
\begin{equation*}
(k-1)\vol (\Sigma) \leq \vol (\partial \Sigma).
\end{equation*}
See \cite{Allard, Castillon, HS, MS, Seo, White} for isoperimetric inequalities involving mean curvature in the more general setting of arbitrary submanifolds.

In this paper we obtain optimal isoperimetric inequalities for complete proper minimal submanifolds in hyperbolic space $\mathbb{H}^n$. Recall that a submanifold in $\mathbb{H}^n$ is {\it proper} if the intersection with any compact set in $\mathbb{H}^n$ is always compact. The existence of complete minimal submanifolds in hyperbolic space was established in \cite{Anderson, Anderson 1983, Lin invent, Lin CPAM}. Throughout this paper, $\mathbb{H}^n$ will denote the hyperbolic $n$-space of constant curvature $-1$. Among several models of $\mathbb{H}^n$, we will identify $\mathbb{H}^n$ with the unit ball $B^n$ in $\mathbb{R}^n$ using the Poincar\'{e} ball model. If $ds_{\mathbb{H}}^2$ denotes the hyperbolic metric on $B^n$, then we have the following conformal equivalence of $\mathbb{H}^n$ with $\mathbb{R}^n$:
\begin{equation*}
ds_{\mathbb{H}}^2 = \frac{4}{(1-r^2)^2}ds_{\mathbb{R}}^2,
\end{equation*}
where $ds_{\mathbb{R}}^2$ is the Euclidean metric on $B^n$ and $r$ is the Euclidean distance from the origin. We recall that any $k$-dimensional totally geodesic submanifold in $\mathbb{H}^n$ is a domain on a $k$-dimensional Euclidean sphere meeting $\partial B^n$ orthogonally. The unit sphere $\mathbb{S}^{n-1}= \partial B^n$ is called the {\it sphere at infinity} and denoted by $\partial_\infty \mathbb{H}^n$.

In Section 2, we study the isoperimetric inequality for a $k$-dimensional complete proper minimal submanifold $\Sigma$ in the Poincar\'{e} ball model $B^n$ of hyperbolic space $\mathbb{H}^n$. Obviously, the $k$-dimensional hyperbolic volume of $\Sigma$ is infinite. However, if we consider $\Sigma$ as a subset of the unit ball $B^n$ in Euclidean space, we can measure the $k$-dimensional Euclidean volume of $\Sigma$, which is denoted by $\rvol(\Sigma)$. Similarly, we measure the $(k-1)$-dimensional Euclidean volume of the ideal boundary $\partial_\infty \Sigma :=\overline{\Sigma} \cap \partial_\infty \mathbb{H}^n$, denoted by $\rvol(\partial_\infty \Sigma)$. By using the concept of the Euclidean volume, we obtain an optimal linear isoperimetric inequality for a $k$-dimensional complete proper minimal submanifold in the Poincar\'{e} ball model $B^n$ of $\mathbb{H}^n$.
\begin{theorem*}
Let $\Sigma$ be a $k$-dimensional complete proper minimal submanifold in the Poincar\'{e} ball model $B^n$. Then
\begin{equation*}
\rvol(\Sigma) \leq \frac{1}{k} \rvol(\partial_{\infty} \Sigma),
\end{equation*}
where equality holds if and only if $\Sigma$ is a $k$-dimensional unit ball $B^k$ in $B^n$.
\end{theorem*}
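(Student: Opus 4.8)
The plan is to transfer the hyperbolic minimality condition into a statement about the \emph{Euclidean} mean curvature of $\Sigma$, and then to run a first-variation (divergence-theorem) argument with the Euclidean position vector field on the exhaustion of $\Sigma$ by the sets $\Sigma_t := \Sigma \cap \{r \le t\}$, letting $t \to 1$.

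First I would record the effect of the conformal factor. Writing $ds_{\mathbb H}^2 = e^{2\phi}\,ds_{\mathbb R}^2$ with $\phi = \log\frac{2}{1-r^2}$, the mean curvature vectors (taken as traces of the respective second fundamental forms) of the $k$-dimensional $\Sigma$ obey the standard conformal transformation law $H_{\mathbb H} = e^{-2\phi}\big(H_{\mathbb R} - k(\bar\nabla\phi)^{\perp}\big)$, where $\bar\nabla\phi = \frac{2X}{1-r^2}$ is the Euclidean gradient, $X$ denotes the Euclidean position vector, and $\perp$ is the Euclidean normal projection along $\Sigma$. Since $\Sigma$ is minimal in $\mathbb H^n$, i.e. $H_{\mathbb H}=0$, this yields the pointwise identity $H_{\mathbb R} = \frac{2k}{1-r^2} X^{\perp}$. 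In particular $\langle X, H_{\mathbb R}\rangle = \frac{2k}{1-r^2}\,|X^{\perp}|^2 \ge 0$, with equality exactly where $X$ is tangent to $\Sigma$.

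Next I would compute the tangential divergence of the position vector. Using $\nabla^{\mathbb R}_{e_i} X = e_i$ one has $\sum_i\langle \nabla^{\mathbb R}_{e_i}X, e_i\rangle = k$; the normal part of this trace contributes $-\langle X, H_{\mathbb R}\rangle$, so that the intrinsic divergence of the tangential component satisfies $\Div_\Sigma X^{T} = k + \langle X, H_{\mathbb R}\rangle = k\big(1 + \tfrac{2|X^\perp|^2}{1-r^2}\big)$. Integrating over $\Sigma_t$ and applying the divergence theorem,
\[
\int_{\Sigma_t} k\Big(1 + \tfrac{2|X^\perp|^2}{1-r^2}\Big)\,dA = \int_{\partial\Sigma_t}\langle X,\nu\rangle\,ds,
\]
where $\nu$ is the outward unit conormal along $\partial\Sigma_t = \Sigma\cap\{r=t\}$. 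On this slice $|X| = t$, so $\langle X,\nu\rangle \le t$ by Cauchy--Schwarz, and hence the right-hand side is at most $t\,\rvol(\partial\Sigma_t)$. The left-hand side is monotone increasing in $t$, so I would let $t\to 1$: the left-hand side tends to $k\,\rvol(\Sigma) + \int_\Sigma\langle X,H_{\mathbb R}\rangle\,dA$, while properness forces $\Sigma$ to reach $\partial_\infty\mathbb H^n$, giving $\rvol(\partial\Sigma_t) \to \rvol(\partial_\infty\Sigma)$. Combining these yields $k\,\rvol(\Sigma) \le k\,\rvol(\Sigma) + \int_\Sigma\langle X, H_{\mathbb R}\rangle\,dA \le \rvol(\partial_\infty\Sigma)$, which is the desired inequality.

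For the equality case I would trace the two inequalities backwards. Equality forces $\int_\Sigma\langle X, H_{\mathbb R}\rangle\,dA = 0$, and since the integrand is nonnegative this means $X^{\perp}\equiv 0$; that is, the position vector is everywhere tangent to $\Sigma$. Hence the radial field is tangent, its integral curves are rays through the origin, and $\Sigma$ is a cone with vertex at the origin; smoothness forces $\Sigma$ to be a piece of a $k$-dimensional linear subspace, which meets $\partial B^n$ orthogonally and is therefore totally geodesic in $\mathbb H^n$. Intersected with $B^n$ this is exactly the unit ball $B^k$, and conversely $B^k$ realizes equality. The main obstacle is the limiting step at $t\to 1$: one must justify $\rvol(\partial\Sigma_t)\to\rvol(\partial_\infty\Sigma)$ and rule out any loss or excess of boundary volume, which relies on properness together with the asymptotically orthogonal way a proper minimal $\Sigma$ approaches the sphere at infinity (itself a consequence of the identity $H_{\mathbb R} = \frac{2k}{1-r^2}X^\perp$, which forces $X^\perp \to 0$ as $r\to 1$). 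Controlling this boundary behavior carefully is where the real work lies; the interior divergence identity and the conformal computation are routine.
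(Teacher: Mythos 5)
Your argument is correct and reaches the paper's key intermediate inequality $k\,\rvol(\Sigma_t)\le t\,\rvol(\partial\Sigma_t)$, but by a genuinely different route. The paper works intrinsically in the hyperbolic metric: it uses the Choe--Gulliver formula $\triangle_\Sigma\rho=\coth\rho\,(k-|\nabla_\Sigma\rho|^2)$ to show $\triangle_\Sigma\bigl[-\tfrac{1}{k(k-1)}(1+\cosh\rho)^{1-k}\bigr]\ge (1+\cosh\rho)^{-k}$, integrates this against $dV_{\mathbb H}$ (noting that $(1+\cosh\rho)^{-k}dV_{\mathbb H}=dV_{\mathbb R}$), and applies the divergence theorem with the bound $\partial\rho/\partial\nu\le 1$. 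You instead work extrinsically in the Euclidean metric, converting hyperbolic minimality into the identity $H_{\mathbb R}=\frac{2k}{1-r^2}X^{\perp}$ and running the Euclidean first-variation formula for the position vector. The two proofs are dual to one another --- your interior slack $X^\perp=0$ is exactly the paper's condition $|\nabla_\Sigma\rho|=1$, and the boundary slacks likewise coincide --- but yours buys something extra: the inequality $\langle X,H_{\mathbb R}\rangle\ge 0$ feeds directly into the standard Euclidean monotonicity computation and so also yields the paper's Theorem \ref{thm:monotonicity} with no further work, whereas the paper reproves monotonicity separately with another round of the same Laplacian estimate. One caveat in both directions: the limit $\rvol(\partial\Sigma_t)\to\rvol(\partial_\infty\Sigma)$ as $t\to 1$, which you correctly flag as the delicate point, is not given any more justification in the paper than in your sketch (the paper simply ``lets $R$ tend to $1$''), so you should not feel that you are missing an ingredient the authors supply; your equality analysis (cone through the origin, hence a linear $B^k$) likewise matches theirs.
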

\noindent This theorem can be regarded as a sort of the result obtained in \cite{CG2} and \cite{Yau}. We further obtain an optimal isoperimetric inequality for a $k$-dimensional complete proper minimal submanifold $\Sigma \subset B^n$ with the Euclidean volume $\rvol(\partial_\infty \Sigma)$ bigger than or equal to the Euclidean volume of the $(k-1)$-dimensional unit sphere $\mathbb{S}^{k-1}$
\begin{equation} \label{ineq:new}
k^k \omega_k \rvol(\Sigma)^{k-1} \leq \rvol(\partial_\infty \Sigma)^k ,
\end{equation}
where equality holds if and only if $\Sigma$ is a $k$-dimensional unit ball $B^k$ in $B^n$ (see Corollary \ref{cor:isop ineq}). It would be interesting to find the classes of ambient spaces and submanifolds in them for which the classical isoperimetric inequality (\ref{ineq:classical}) remains valid (see \cite{Osserman} and \cite{White}). In this point of view, the above inequality (\ref{ineq:new}) gives one possible class of submanifolds of $B^n$ satisfying the classical isoperimetric inequality. It should be mentioned that the inequality (\ref{ineq:new}) has no additional volume term which appears in the inequality (\ref{ineq:isop for min surf in H}).

In Section 3, we give a sharp lower bound for the Euclidean volume of a $k$-dimensional complete proper minimal submanifold $\Sigma$ in the Poincar\'{e} ball model $B^n$. In Euclidean space, Fraser and Schoen \cite{FS} recently obtained that if $\Sigma$ is a minimal surface in the unit ball $B^n\subset \mathbb{R}^n$ with (nonempty) boundary $\partial \Sigma \subset \partial B^n$, and meeting $\partial B^n$ orthogonally along $\partial \Sigma$, then
\begin{equation*}
\area (\Sigma) \geq \pi.
\end{equation*}
In the Poincar\'{e} ball model $B^n$, any complete proper minimal submanifold meets the sphere at infinity $\partial_\infty \mathbb{H}^n$ orthogonally because of the maximum principle. Therefore it is natural to ask whether there is also a sharp lower bound for the Euclidean volume of a complete proper minimal submanifold $\Sigma$ in the Poincar\'{e} ball model or not. Although it is not true even for complete totally geodesic submanifolds, we have the following sharp lower bound for the Euclidean volume under the additional hypothesis that $\Sigma$ contains the origin.
\begin{theorem*}
Let $\Sigma$ be a $k$-dimensional complete proper minimal submanifold in the Poincar\'{e} ball model $B^n$. If $\Sigma$ contains the origin in $B^n$, then
\begin{equation*}
\rvol(\Sigma) \geq \omega_k = \rvol (B^k),
\end{equation*}
where equality holds if and only if $\Sigma$ is a $k$-dimensional unit ball $B^k$ in $B^n$.
\end{theorem*}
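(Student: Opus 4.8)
The plan is to combine the conformal transformation law for the mean curvature with a monotonicity formula for the \emph{Euclidean} volume of $\Sigma$, centered at the origin. The essential point is that although $\Sigma$ is minimal only with respect to the hyperbolic metric $ds_{\mathbb{H}}^2 = e^{2\varphi}ds_{\mathbb{R}}^2$ with $e^{\varphi}=2/(1-r^2)$, its \emph{Euclidean} mean curvature vector $H$ has a sign-definite inner product with the position vector $x$, and this is exactly what forces the volume ratio to grow.

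First I would compute $H$. Writing $g_E$ and $g_H=e^{2\varphi}g_E$ for the two metrics, the hyperbolic and Euclidean mean curvature vectors $\widetilde H$ and $H$ of $\Sigma$ are related by $\widetilde H = e^{-2\varphi}\bigl(H - k(\nabla\varphi)^{\perp}\bigr)$, where $(\cdot)^{\perp}$ denotes the Euclidean normal projection along $\Sigma$. Since $\widetilde H=0$ and $\nabla\varphi = \tfrac{2x}{1-r^2}$, this gives $H = \tfrac{2k}{1-r^2}x^{\perp}$, and hence
\[
\langle H, x\rangle = \frac{2k}{1-r^2}\,|x^{\perp}|^2 \ge 0
\]
at every point of $\Sigma$. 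Next I would set up the monotonicity. For $0<\rho<1$ put $A(\rho)=\rvol(\Sigma\cap B_\rho)$; since $\Sigma$ is proper, $\Sigma\cap\overline{B_\rho}$ is compact, $A(\rho)$ is finite, and $A(\rho)\to\rvol(\Sigma)$ as $\rho\to1^-$ by monotone convergence. Applying the first variation (divergence) formula on $\Sigma_\rho:=\Sigma\cap B_\rho$ to the position field $X=x$, using $\Div_\Sigma x = k$ and the fact that $\Sigma$ has no boundary inside $B_\rho$ (its boundary lies on $\partial_\infty\mathbb{H}^n$), yields
\[
kA(\rho) = -\int_{\Sigma_\rho}\langle H, x\rangle\,dA \;+\; \rho\int_{\Sigma\cap\partial B_\rho}|\nabla^{\Sigma} r|\,ds .
\]
The first term is $\le 0$ by the computation above, while the coarea formula together with $|\nabla^{\Sigma}r|\le 1$ gives $\int_{\Sigma\cap\partial B_\rho}|\nabla^{\Sigma}r|\,ds \le A'(\rho)$. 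Hence $kA(\rho)\le \rho A'(\rho)$, i.e. $\rho\mapsto A(\rho)/\rho^{k}$ is non-decreasing. Because the origin is a smooth point of $\Sigma$, $\lim_{\rho\to0^{+}}A(\rho)/\rho^{k}=\omega_k$, so $A(\rho)/\rho^{k}\ge\omega_k$ for all $\rho$; letting $\rho\to1^{-}$ gives $\rvol(\Sigma)\ge\omega_k$.

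The main obstacle is pinning down the conformal law with the correct sign and the correct factor $k$: the opposite sign would reverse the monotonicity and the bound, so I would verify it independently by testing it on an explicit hyperbolic geodesic in $B^2$ (a Euclidean circle meeting $\partial B^2$ orthogonally), where one can check directly that the formula returns $\widetilde H=0$. For the equality case, $\rvol(\Sigma)=\omega_k$ forces $A(\rho)/\rho^{k}$ to be constant on $(0,1)$, so both inequalities in the monotonicity must be equalities at every scale: this requires $\langle H,x\rangle\equiv 0$, i.e. $x^{\perp}\equiv 0$, together with $|\nabla^{\Sigma}r|\equiv 1$. Thus the position vector is everywhere tangent to $\Sigma$, so $\Sigma$ is a cone with vertex at the origin; being smooth there, it must be a $k$-plane through $0$, that is, $\Sigma=B^k$. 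Conversely $B^k$ is totally geodesic in $\mathbb{H}^n$ with $\rvol(B^k)=\omega_k$, so equality indeed holds precisely for $\Sigma=B^k$.
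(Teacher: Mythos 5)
Your argument is correct, and it reaches the paper's conclusion by the same overall strategy: establish that $\rho\mapsto \rvol(\Sigma\cap B_\rho)/\rho^k$ is nondecreasing, note that the limit as $\rho\to 0^+$ is $\omega_k\Theta(\Sigma,O)\ge\omega_k$ because $\Sigma$ passes through the origin, and let $\rho\to 1^-$. Where you genuinely diverge from the paper is in \emph{how} the monotonicity is proved. The paper stays entirely inside the hyperbolic metric: it applies Lemma \ref{lem:CG} to the function $f=-\tfrac{1}{k(k-1)}(1+\cosh\rho)^{-(k-1)}$, shows $\triangle_\Sigma f\ge (1+\cosh\rho)^{-k}$, and then combines the divergence theorem with the coarea formula for the hyperbolic distance, converting back to Euclidean volume via $dV_{\mathbb R}=(1+\cosh\rho)^{-k}dV_{\mathbb H}$. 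You instead work on the Euclidean side from the start: the conformal transformation law gives the Euclidean mean curvature vector $H=\tfrac{2k}{1-r^2}x^{\perp}$, so $\langle H,x\rangle=\tfrac{2k}{1-r^2}|x^{\perp}|^2\ge 0$, and the classical first-variation identity with the position field then yields $kA(\rho)\le\rho A'(\rho)$ directly. Both computations are correct (your sign check against a geodesic circle in $B^2$ is the right sanity test, and it does confirm $H=k(\nabla\varphi)^{\perp}$ there); they are really the same estimate seen through the two conformally related metrics. Your route has the advantage of making transparent \emph{why} the Euclidean monotonicity holds — the induced Euclidean mean curvature points ``outward'' relative to the origin — and of reducing to the standard Euclidean monotonicity argument for submanifolds with $\langle H,x\rangle\ge0$; the paper's route avoids the conformal curvature formula and fits into the same framework (Lemma \ref{lem:laplacian}) used for Theorem \ref{thm:linear isop}. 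Your treatment of the equality case ($x^{\perp}\equiv 0$ forces $\Sigma$ to be a cone through a smooth point, hence a linear $B^k$) is also sound and is in fact slightly more explicit than what the paper records for this corollary.
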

\noindent In order to prove this theorem, we prove the monotonicity theorem (see Theorem \ref{thm:monotonicity}). In $\mathbb{R}^n$ and $\mathbb{H}^n$, the monotonicity theorem is well-known and has many important applications \cite{Anderson, CM, EWW, Simon}. By measuring the Euclidean volume rather than the hypervolic volume in $B^n$, we have the following theorem.
\begin{theorem*}
Let $\Sigma$ be a $k$-dimensional complete minimal submanifold in $B^n$. If $r$ is the Euclidean distance from the origin, then
the function $\displaystyle{\frac{\rvol(\Sigma\cap B_r)}{r^k}}$ is nondecreasing in $r$ for $0<r<1$, where $B_r$ is the Euclidean ball of radius $r$ centered at the origin.
\end{theorem*}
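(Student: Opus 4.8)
The plan is to run the classical Euclidean monotonicity argument, regarding $\Sigma$ as a $k$-dimensional Euclidean submanifold of $B^n\subset\mathbb{R}^n$ and testing the first variation formula against the radial position field. The only genuine subtlety is that $\Sigma$ is minimal for the hyperbolic metric $ds_{\mathbb{H}}^2$, not the Euclidean metric, so as a Euclidean submanifold it carries a nonzero mean curvature vector $\vec H_{\mathbb{R}}$. The heart of the proof is to compute $\vec H_{\mathbb{R}}$ explicitly and to check that its pairing with the position vector has a favorable sign; everything else is then routine.

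First I would record the conformal relation $ds_{\mathbb{H}}^2=e^{2\varphi}ds_{\mathbb{R}}^2$ with $\varphi=\log\frac{2}{1-r^2}$, where $r=|x|$ is the Euclidean distance from the origin. Under a conformal change $\bar g=e^{2\varphi}g$ the (unnormalized) mean curvature vectors of a $k$-dimensional submanifold satisfy $\vec H_{\bar g}=e^{-2\varphi}\bigl(\vec H_{g}-k(\nabla^{g}\varphi)^{\perp}\bigr)$, where $(\cdot)^{\perp}$ is the component normal to $\Sigma$. Since $\vec H_{\mathbb{H}}=0$ and $\nabla^{\mathbb{R}}\varphi=\frac{2x}{1-r^2}$, this yields the explicit formula $\vec H_{\mathbb{R}}=\frac{2k}{1-r^2}\,x^{\perp}$, where $x^{\perp}$ denotes the normal projection of the position vector $x$. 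In particular $\langle x,\vec H_{\mathbb{R}}\rangle=\frac{2k}{1-r^2}|x^{\perp}|^{2}\geq 0$, and one can confirm the formula directly on a totally geodesic spherical cap meeting $\partial B^n$ orthogonally, where it reproduces the Euclidean mean curvature of the cap.

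With the sign in hand, I would restrict the position field $x$ to $\Sigma$, split $x=x^{T}+x^{\perp}$, and use the standard identity $\operatorname{div}_{\Sigma}(x^{T})=k+\langle x,\vec H_{\mathbb{R}}\rangle$. Applying the divergence theorem on $\Sigma\cap B_r$, whose outward conormal along $\Sigma\cap\partial B_r$ is $\nu=x^{T}/|x^{T}|$, the boundary term equals $\int_{\Sigma\cap\partial B_r}|x^{T}|\,d\sigma$, so that, writing $g(r)=\rvol(\Sigma\cap B_r)$, we obtain $k\,g(r)+\int_{\Sigma\cap B_r}\langle x,\vec H_{\mathbb{R}}\rangle\,dV=\int_{\Sigma\cap\partial B_r}|x^{T}|\,d\sigma$. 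By the coarea formula applied to $\rho(x)=|x|$, whose $\Sigma$-gradient has length $|\nabla^{\Sigma}\rho|=|x^{T}|/r$, we have $g'(r)=\int_{\Sigma\cap\partial B_r}\frac{r}{|x^{T}|}\,d\sigma$; since $|x^{T}|\leq r$ on $\partial B_r$, the pointwise bound $|x^{T}|\leq r^{2}/|x^{T}|$ gives $\int_{\Sigma\cap\partial B_r}|x^{T}|\,d\sigma\leq r\,g'(r)$. Combining these with $\langle x,\vec H_{\mathbb{R}}\rangle\geq 0$ yields $r\,g'(r)\geq k\,g(r)$, hence $\frac{d}{dr}\bigl(g(r)/r^{k}\bigr)\geq 0$, which is the claimed monotonicity.

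The main obstacle is the conformal computation of the second step: the entire argument collapses onto the sign of $\langle x,\vec H_{\mathbb{R}}\rangle$, and it is precisely that the Poincar\'e conformal factor grows toward $\partial B^n$ which forces $\vec H_{\mathbb{R}}$ to point along $x^{\perp}$ and makes this term nonnegative. Once that sign is secured, the remaining manipulations are the familiar monotonicity steps, and the extra nonnegative contribution $\int_{\Sigma\cap B_r}\frac{2k\,|x^{\perp}|^{2}}{1-|x|^2}\,dV$ only reinforces the inequality; tracking it also identifies the equality case as the totally geodesic $B^k$ through the origin, where $x^{\perp}\equiv 0$.
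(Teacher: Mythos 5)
Your proposal is correct, and it takes a genuinely different route from the paper. The paper never leaves the hyperbolic picture: it uses the radial test function $f=-\tfrac{1}{k(k-1)}(1+\cosh\rho)^{1-k}$, whose hyperbolic Laplacian dominates the Euclidean volume density $(1+\cosh\rho)^{-k}$ (its Lemma 2.2), then applies the divergence theorem and the coarea formula for the hyperbolic distance $\rho$ to arrive at the differential inequality $\sinh\rho\,\tfrac{d}{d\rho}\rvol(\Sigma_r)\ge k\,\rvol(\Sigma_r)$, and finally converts $d/d\rho$ into $d/dr$. You instead work entirely in the Euclidean metric, accept that $\Sigma$ is no longer minimal there, and compute its Euclidean mean curvature from the conformal transformation law, obtaining $\vec H_{\mathbb{R}}=\tfrac{2k}{1-r^2}x^{\perp}$ (the sign and the formula check out, e.g.\ against a totally geodesic spherical cap meeting $\partial B^n$ orthogonally, where it gives $-\tfrac{k}{R}\omega$ for the cap of Euclidean radius $R$). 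Since $\langle x,\vec H_{\mathbb{R}}\rangle=\tfrac{2k}{1-r^2}|x^{\perp}|^2\ge 0$, the classical first-variation identity $\operatorname{div}_{\Sigma}x^{T}=k+\langle x,\vec H_{\mathbb{R}}\rangle$ together with the coarea formula yields $r\,g'(r)\ge k\,g(r)$ exactly as in the Euclidean monotonicity formula. The two arguments are conformally dual (your $\operatorname{div}_{\Sigma}x^T$ is the Euclidean Laplacian of $|x|^2/2$, while the paper's test function is, up to constants, a power of $\tfrac{1}{1+\cosh\rho}=\tfrac{1-r^2}{2}$), but yours makes the mechanism more transparent --- hyperbolic minimality forces the Euclidean mean curvature to point along $x^{\perp}$, which only helps --- and it visibly extends to any Euclidean submanifold with $\langle x,\vec H_{\mathbb{R}}\rangle\ge 0$; the paper's version has the advantage of reusing its Lemma 2.2, which also drives the linear isoperimetric inequality of Section 2. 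Your closing remark on the equality case ($x^{\perp}\equiv 0$ forcing a minimal cone through the origin, hence a flat $B^k$) is a correct bonus not required by the statement.
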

\noindent Applying this monotonicity theorem, we give an optimal isoperimetric inequality for complete proper minimal submanifolds containing the origin in $B^n$ (see Corollary \ref{cor:isop ineq from area bound}).

Given a $k$-dimensional complete proper submanifold $\Sigma$ in $B^n$, we introduce the M\"{o}bius volumes of $\Sigma$ and $\partial_\infty \Sigma$ in Section 4. We estimate a lower bound for the M\"{o}bius volume of $\partial_\infty \Sigma$ (Theorem \ref{thm:density}). Using this result, we finally obtain an isoperimetric inequality via the M\"{o}bius volume.

%%%%%%%%%%%%%%%%%%%%%%%%%%%%%%%%%%%%%%%%%%%%%%%%%%%%%%%%%%%%%%%%%%%%%%%%%%%%%%%%%%%%
\section{Linear isoperimetric inequality}
%%%%%%%%%%%%%%%%%%%%%%%%%%%%%%%%%%%%%%%%%%%%%%%%%%%%%%%%%%%%%%%%%%%%%%%%%%%%%%%%%%%%
Let $\Sigma$ be a $k$-dimensional minimal submanifold in the $n$-dimensional hyperbolic space $\mathbb{H}^n$. Let $\rho(x)$ be the (hyperbolic) distance from a fixed point $p$ to $x$ in $\mathbb{H}^n$. We recall that the following basic fact about the Laplacian of the distance, which is due to Choe and Gulliver \cite{CG2}.
\begin{lem}[\cite{CG2}] \label{lem:CG}
Let $\Sigma$ be a $k$-dimensional minimal submanifold in $\mathbb{H}^n$. Then the distance $\rho$ satisfies that
\begin{equation*}
\triangle_\Sigma \rho = \coth \rho (k-|\nabla_\Sigma \rho|^2) .
\end{equation*}
\end{lem}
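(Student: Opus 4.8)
The plan is to relate the intrinsic Laplacian $\triangle_\Sigma \rho$ to the ambient Hessian of $\rho$ in $\mathbb{H}^n$, exploit minimality to discard the mean curvature term, and then insert the explicit form of the Hessian of a distance function in a space of constant curvature $-1$.

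First I would fix a point $q \in \Sigma$, choose an orthonormal frame $\{e_1,\dots,e_k\}$ of $T_q\Sigma$, and recall the general identity comparing the two Laplacians: for any smooth function $f$ on $\mathbb{H}^n$,
\begin{equation*}
\triangle_\Sigma f = \sum_{i=1}^k \mathrm{Hess}\, f(e_i,e_i) + \langle \nabla f, \vec{H}\rangle ,
\end{equation*}
where $\mathrm{Hess}\, f$ and $\nabla f$ are the ambient Hessian and gradient, and $\vec{H} = \sum_i \mathrm{II}(e_i,e_i)$ is the (unnormalized) mean curvature vector of $\Sigma$. This follows by writing $\triangle_\Sigma f = \sum_i\big(e_i(e_i f) - (\nabla^\Sigma_{e_i}e_i)f\big)$ and splitting $\nabla^M_{e_i}e_i = \nabla^\Sigma_{e_i}e_i + \mathrm{II}(e_i,e_i)$ via the Gauss formula into tangential and normal parts. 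Since $\Sigma$ is minimal, $\vec{H}=0$, so the identity collapses to $\triangle_\Sigma \rho = \sum_{i=1}^k \mathrm{Hess}\,\rho(e_i,e_i)$.

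The key computation is then the ambient Hessian of the distance function $\rho$ in $\mathbb{H}^n$. Using geodesic polar coordinates about $p$, the hyperbolic metric has the warped-product form $d\rho^2 + \sinh^2\rho\, g_{\mathbb{S}^{n-1}}$. Because $\rho$ is a distance function, the eikonal relation $|\nabla \rho| = 1$ forces $\mathrm{Hess}\,\rho(\nabla\rho,\cdot) = 0$, while on vectors tangent to the geodesic spheres $\{\rho = \text{const}\}$ the Hessian coincides with their second fundamental form, whose principal curvatures are all equal to $\coth\rho$. Together these give the closed form
\begin{equation*}
\mathrm{Hess}\,\rho = \coth\rho\,\big(g - d\rho\otimes d\rho\big)
\end{equation*}
on $\mathbb{H}^n \setminus \{p\}$.

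Finally I would substitute this into the traced expression. Evaluating on the frame yields
\begin{equation*}
\triangle_\Sigma \rho = \coth\rho \sum_{i=1}^k \big(1 - \langle \nabla\rho, e_i\rangle^2\big) = \coth\rho\,\Big(k - \sum_{i=1}^k \langle\nabla\rho,e_i\rangle^2\Big),
\end{equation*}
and since $\sum_i \langle\nabla\rho,e_i\rangle^2$ is precisely the squared length of the tangential projection of $\nabla\rho$ onto $T_q\Sigma$, namely $|\nabla_\Sigma\rho|^2$, this is exactly the claimed formula. The only genuine content lies in the third step, the Hessian of the hyperbolic distance, which is a standard constant-curvature comparison computation; the remaining steps are bookkeeping with the Gauss formula and minimality, so I anticipate no serious obstacle.
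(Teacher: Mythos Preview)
Your argument is correct and is the standard route to this identity: relate $\triangle_\Sigma \rho$ to the trace of the ambient Hessian via the Gauss formula, drop the mean-curvature term by minimality, and plug in the explicit Hessian $\mathrm{Hess}\,\rho = \coth\rho\,(g - d\rho\otimes d\rho)$ in constant curvature $-1$. Note, however, that the paper does not supply its own proof of this lemma at all---it is simply quoted from Choe--Gulliver \cite{CG2}---so there is no in-paper argument to compare against; your write-up would serve perfectly well as the omitted justification.
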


\noindent Let $f$ be a smooth function in $\rho$ on $\Sigma$. Applying the above lemma, we get
\begin{align*}
\triangle_{\Sigma} f &= \Div(\nabla_\Sigma f)  \\
&= f''|\nabla_\Sigma \rho|^2 + f' \triangle_\Sigma \rho  \\
&= f''|\nabla_\Sigma \rho|^2 + f' \coth \rho (k-|\nabla_\Sigma \rho|^2) \\
&= k f' \coth \rho - |\nabla_\Sigma \rho|^2 (f' \coth \rho - f'').
\end{align*}

\noindent For our purpose, we obtain the following useful lemma.

\begin{lem} \label{lem:laplacian}
Let $\Sigma$ be a $k$-dimensional minimal submanifold in $\mathbb{H}^n$. Then
\begin{equation*}
\triangle_{\Sigma} \frac{1}{(1+\cosh \rho)^{k-1}} \leq -\frac{k(k-1)}{(1+\cosh \rho)^k}.
\end{equation*}
Moreover, equality holds at a point $q \in \Sigma$ if and only if $|\nabla_{\Sigma} \rho(q)|=1$.
\end{lem}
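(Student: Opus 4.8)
The plan is to apply the general formula for $\triangle_\Sigma f$ with $f$ a radial function of $\rho$, which was derived immediately above this lemma, to the specific choice $f(\rho) = (1+\cosh\rho)^{-(k-1)}$. Writing $u = 1+\cosh\rho$, I would first record the two derivatives $f' = -(k-1)\sinh\rho \, u^{-k}$ and $f'' = -(k-1)\cosh\rho\, u^{-k} + k(k-1)\sinh^2\rho\, u^{-k-1}$, keeping careful track of signs.

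Substituting these into $\triangle_\Sigma f = k f' \coth\rho - |\nabla_\Sigma\rho|^2(f'\coth\rho - f'')$, the first term becomes $-k(k-1)\cosh\rho\, u^{-k}$, while the combination $f'\coth\rho - f''$ simplifies---after the $\cosh\rho\, u^{-k}$ contributions cancel---to $-k(k-1)\sinh^2\rho\, u^{-k-1}$. Thus I obtain the exact identity
\[
\triangle_\Sigma f = -\frac{k(k-1)\cosh\rho}{(1+\cosh\rho)^k} + |\nabla_\Sigma\rho|^2\,\frac{k(k-1)\sinh^2\rho}{(1+\cosh\rho)^{k+1}}.
\]

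The key observation is that the coefficient multiplying $|\nabla_\Sigma\rho|^2$ is nonnegative and that $|\nabla_\Sigma\rho|^2 \leq 1$ always holds, since $\nabla_\Sigma\rho$ is the orthogonal projection onto $T\Sigma$ of the ambient gradient $\nabla\rho$, which is a unit vector. I may therefore replace $|\nabla_\Sigma\rho|^2$ by $1$ to get an upper bound, and the inequality reduces to a purely algebraic simplification. Factoring out $k(k-1)(1+\cosh\rho)^{-k-1}$ leaves the bracket $-\cosh\rho(1+\cosh\rho) + \sinh^2\rho$, and the hyperbolic identity $\sinh^2\rho - \cosh^2\rho = -1$ collapses this to $-(1+\cosh\rho)$, yielding exactly $-k(k-1)(1+\cosh\rho)^{-k}$, as claimed.

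For the equality case, since the coefficient $k(k-1)\sinh^2\rho\,(1+\cosh\rho)^{-k-1}$ is strictly positive for $\rho>0$, the only inequality used---namely $|\nabla_\Sigma\rho|^2\leq 1$---becomes an equality at $q$ precisely when $|\nabla_\Sigma\rho(q)|=1$, which gives the stated characterization. I do not expect a genuine obstacle here: the whole argument is a direct substitution followed by one trigonometric identity, so the only delicate point is the sign bookkeeping when differentiating $f$ twice and verifying that the $\cosh\rho$ terms cancel correctly in $f'\coth\rho - f''$.
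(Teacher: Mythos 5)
Your proposal is correct and follows essentially the same route as the paper: substitute $f=(1+\cosh\rho)^{-(k-1)}$ into the radial Laplacian formula $\triangle_\Sigma f = kf'\coth\rho - |\nabla_\Sigma\rho|^2(f'\coth\rho-f'')$, observe that $f'\coth\rho-f'' = -k(k-1)\sinh^2\rho\,(1+\cosh\rho)^{-k-1}\le 0$, and use $|\nabla_\Sigma\rho|^2\le 1$ to reduce to the algebraic identity $(k-1)f'\coth\rho+f''=-k(k-1)(1+\cosh\rho)^{-k}$. Your sign bookkeeping and the equality characterization both match the paper's argument.
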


\begin{proof}
Define a smooth function $f$ as
\begin{equation*}
f=\frac{1}{(1+\cosh \rho)^{k-1}}.
\end{equation*}
Since
\begin{eqnarray*}
f'&=&-(k-1)\frac{\sinh \rho}{(1+\cosh \rho)^k} \hspace{1cm} \mbox{and} \\
f''&=&-(k-1)\frac{\cosh \rho (1+ \cosh \rho) - k \sinh^2 \rho}{(1+\cosh \rho)^{k+1}},
\end{eqnarray*}
it immediately follows that
\begin{eqnarray*}
f' \coth \rho - f'' &=& -(k-1)\left(\frac{\sinh \rho}{(1+\cosh \rho)^k} \cdot \frac{\cosh \rho}{\sinh \rho} -\frac{\cosh \rho (1+ \cosh \rho) - k \sinh^2 \rho}{(1+\cosh \rho)^{k+1}}\right)\\
&=& -k(k-1)\frac{\sinh^2 \rho}{(1+\cosh \rho)^{k+1}} \leq 0.
\end{eqnarray*}
From Lemma \ref{lem:CG} and the fact that $|\nabla_\Sigma \rho|\leq 1$, it follows that
\begin{align}
\triangle_{\Sigma} \frac{1}{(1+\cosh \rho)^{k-1}} &\leq k f' \coth \rho - (f' \coth \rho - f'') \label{ineq: in lem} \\
&= (k-1) f' \coth \rho + f'' \nonumber \\
&= -\frac{k(k-1)}{(1+\cosh \rho)^k}. \nonumber
\end{align}
It is clear that equality in the above inequality (\ref{ineq: in lem}) holds at a point $q\in \Sigma$ if and only if $|\nabla_\Sigma \rho (q)| = 1$.

\end{proof}

As mentioned in the introduction, Yau \cite{Yau}, Choe and Gulliver \cite{CG2} proved the linear isoperimetric inequality for a domain $\Sigma$ in $\mathbb{H}^k$ or a $k$-dimensional compact minimal submanifold $\Sigma$ in $\mathbb{H}^n$
\begin{equation*}
(k-1)\vol (\Sigma) \leq \vol (\partial \Sigma).
\end{equation*}
Although this linear isoperimetric inequality is not sharp, it may be observed that the inequality is asymptotically sharp. Motivated by this, we consider a $k$-dimensional complete proper minimal submanifold $\Sigma$ in the Poincar\'{e} ball model $B^n$. If we regard $\Sigma$ as a subset of the unit ball $B^n$ in Euclidean space, we can measure the $k$-dimensional Euclidean volume of $\Sigma$ and the $(k-1)$-dimensional Euclidean volume of $\partial_\infty \Sigma$, say $\rvol(\Sigma)$ and $\rvol(\partial_\infty \Sigma)$, respectively. In what follows, we prove an optimal linear isoperimetric inequality in terms of $\rvol(\Sigma)$ and $\rvol(\partial_\infty \Sigma)$.

\begin{thm} \label{thm:linear isop}
Let $\Sigma$ be a $k$-dimensional complete proper minimal submanifold in the Poincar\'{e} ball model $B^n$. Then
\begin{equation}
\rvol(\Sigma) \leq \frac{1}{k} \rvol(\partial_{\infty} \Sigma), \label{ineq:linear isop}
\end{equation}
where equality holds if and only if $\Sigma$ is a $k$-dimensional unit ball $B^k$ in $B^n$.
\end{thm}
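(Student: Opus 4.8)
The plan is to apply Lemma \ref{lem:laplacian} with the base point $p$ taken to be the origin $0\in B^n$, integrate the resulting differential inequality over the compact pieces $\Sigma\cap B_R$ ($R<1$), and then let $R\to1$. The whole argument hinges on the fact that with $p=0$ the auxiliary function $f=\frac{1}{(1+\cosh\rho)^{k-1}}$ of the lemma becomes purely Euclidean–radial: since the hyperbolic distance $\rho$ from the origin satisfies $1+\cosh\rho=\frac{2}{1-r^2}$, one has
\[
\frac{1}{(1+\cosh\rho)^{k-1}}=\left(\frac{1-r^2}{2}\right)^{k-1},\qquad \frac{1}{(1+\cosh\rho)^{k}}=\left(\frac{1-r^2}{2}\right)^{k}.
\]
Combined with the conformal relation $ds_{\mathbb H}^2=\frac{4}{(1-r^2)^2}ds_{\mathbb R}^2$, which gives $dV_{\mathbb H}=(\tfrac{2}{1-r^2})^k\,dV_{\mathbb R}$ on $\Sigma$ and $dA_{\mathbb H}=(\tfrac{2}{1-r^2})^{k-1}\,dA_{\mathbb R}$ on a radial slice, all conformal factors will cancel and leave clean Euclidean quantities.

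Concretely, I would integrate the inequality of Lemma \ref{lem:laplacian} over $\Sigma\cap B_R$ for a regular value $R$ of $r|_\Sigma$ and apply the divergence theorem in the induced hyperbolic metric. The right-hand side integrates to exactly $-k(k-1)\,\rvol(\Sigma\cap B_R)$, because the factor $(\tfrac{1-r^2}{2})^k$ cancels against $dV_{\mathbb H}$. For the flux, the outward hyperbolic unit normal of the slice $\Sigma\cap\partial B_R$ is $\nu=\nabla_\Sigma\rho/|\nabla_\Sigma\rho|$, so the boundary term is $f'(\rho_R)\int_{\Sigma\cap\partial B_R}|\nabla_\Sigma\rho|\,dA_{\mathbb H}$. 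Using $|\nabla_\Sigma\rho|_{\mathbb H}=|\nabla^{\mathbb R}_\Sigma r|_{\mathbb R}$ (conformal invariance of the gradient together with $d\rho/dr=\tfrac{2}{1-r^2}$) and the value of $f'$ at $\rho_R$, a short computation gives $f'(\rho_R)(\tfrac{2}{1-R^2})^{k-1}=-(k-1)R$. Combining the two sides and dividing by $-(k-1)$ yields the key inequality
\[
R\int_{\Sigma\cap\partial B_R}|\nabla^{\mathbb R}_\Sigma r|\,dA_{\mathbb R}\ \geq\ k\,\rvol(\Sigma\cap B_R),\qquad 0<R<1,
\]
where the slack is precisely the integral of the nonnegative defect $(1-|\nabla_\Sigma\rho|^2)$ from Lemma \ref{lem:laplacian}.

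Next I would let $R\uparrow 1$. The right-hand side increases to $k\,\rvol(\Sigma)$ by monotone convergence. The hard part, and the main obstacle, is showing that the left-hand side converges to $\rvol(\partial_\infty\Sigma)$. Here I would use that $\Sigma$ is proper and, by the maximum principle, meets $\mathbb S^{n-1}$ orthogonally, so that $\overline\Sigma$ is a $C^1$ manifold-with-boundary in $\overline{B^n}$ with $\partial\overline\Sigma=\partial_\infty\Sigma$. Orthogonality forces $|\nabla^{\mathbb R}_\Sigma r|\to1$ along $\Sigma$ as $r\to1$, while the $C^1$ structure up to the ideal boundary makes the slices $\Sigma\cap\partial B_R$ converge to $\partial_\infty\Sigma$ with $\area_{\mathbb R}(\Sigma\cap\partial B_R)\to\rvol(\partial_\infty\Sigma)$; together these give $R\int_{\Sigma\cap\partial B_R}|\nabla^{\mathbb R}_\Sigma r|\,dA_{\mathbb R}\to\rvol(\partial_\infty\Sigma)$. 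Making this asymptotic-regularity input precise (cf. \cite{Anderson, Lin invent, Lin CPAM}) is the technical crux; everything else is an exact computation. Passing to the limit yields $\rvol(\partial_\infty\Sigma)\geq k\,\rvol(\Sigma)$, which is \eqref{ineq:linear isop}.

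Finally, for the equality case I would trace back through the slack. Equality in \eqref{ineq:linear isop} forces the accumulated defect to vanish, i.e. $|\nabla_\Sigma\rho|\equiv1$ on $\Sigma$, which by Lemma \ref{lem:laplacian} means the radial direction is everywhere tangent to $\Sigma$ — equivalently, the Euclidean position vector field has vanishing normal part, $x^\perp\equiv0$. Thus $\Sigma$ is invariant under Euclidean dilations, i.e. a cone with vertex at the origin; being a smooth submanifold, it must be the intersection of a $k$-plane through $0$ with $B^n$, namely a totally geodesic $B^k$. Conversely $B^k$ is totally geodesic, hence minimal, and realizes equality, completing the characterization.
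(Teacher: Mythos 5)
Your proposal is correct and follows essentially the same route as the paper: the same test function $f=(1+\cosh\rho)^{1-k}$ from Lemma \ref{lem:laplacian}, integration over $\Sigma\cap B_R$ with the divergence theorem, cancellation of the conformal factors to produce Euclidean quantities, the limit $R\to1$, and the same rigidity analysis via $|\nabla_\Sigma\rho|\equiv1$. The only cosmetic differences are that you keep $|\nabla_\Sigma\rho|$ in the boundary integrand where the paper simply bounds $\partial\rho/\partial\nu\le1$ to get $\frac{R}{k}\rvol(\partial\Sigma_R)$, and that you explicitly flag the boundary-regularity input needed for $R\to1$, which the paper passes over silently.
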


\begin{proof}
Let $r(x)$ and $\rho(x)$ be the Euclidean and hyperbolic distance from the origin to $x\in B^n$, respectively. Recall that the distance functions $r$ and $\rho$ satisfy that
\begin{equation}
\rho=\ln \frac{1+r}{1-r} \text{ \ \ and \ \ } r=\tanh \frac{\rho}{2} = \frac{\sinh \rho}{1+ \cosh \rho}  \label{relation: r and rho} .
\end{equation}
Denote by $B_R$ the Euclidean ball of radius $R$ centered at the origin for $0<R<1$. Note that the Euclidean ball $B_R$ can be thought of as the hyperbolic ball $B_{R^*}$ of radius $R^*$ in the Poincar\'{e} ball model $B^n$, where $R^*=\ln \frac{1+R}{1-R}$.

We now consider two kinds of the intersection of $\Sigma$ with the Euclidean ball $B_R$ and the hyperbolic ball $B_{R^*}$ as following. Denote by $\Sigma_R$ the intersection $\Sigma \cap B_R$ (possibly empty) which has the volume form $dV_\mathbb{R}$ induced from the Euclidean metric. Similarly, denote by $\widetilde{\Sigma}_{R^*}$ the intersection $\Sigma \cap B_{R^*}$ which has the volume form $dV_\mathbb{H}$ induced from the hyperbolic metric.
Since
\begin{equation*}
dV_\mathbb{R} = \left( \frac{1-r^2}{2}\right)^k dV_\mathbb{H},
\end{equation*}
the Euclidean volume $\rvol(\Sigma_R)$ can be computed as
\begin{align} \label{eqn:area comparison}
\rvol(\Sigma_{R}) &= \int_{\Sigma_{R}} d V_{\mathbb{R}} \nonumber\\
&= \int_{\Sigma_{R}} \left(\frac{1-r^2}{2} \right)^k d V_{\mathbb{H}} \nonumber\\
&= \int_{\widetilde{\Sigma}_{R^*}} \frac{1}{(1+ \cosh \rho)^k} d V_{\mathbb{H}},
\end{align}
where we used the equation (\ref{relation: r and rho}) in the last equality.

Define a smooth function $f$ on $\Sigma \subset B^n$ by
\begin{equation*}
f=-\frac{1}{k(k-1)} \cdot \frac{1}{(1+\cosh \rho)^{k-1}} .
\end{equation*}
Using the equality (\ref{eqn:area comparison}), Lemma \ref{lem:laplacian}, and the divergence theorem, we obtain
\begin{align}
\rvol(\Sigma_{R}) &= \int_{\widetilde{\Sigma}_{R^*}} \frac{1}{(1+ \cosh \rho)^k} d V_{\mathbb{H}} \nonumber\\
&\leq \int_{\widetilde{\Sigma}_{R^*}} \triangle_\Sigma f dV_{\mathbb{H}} \nonumber \\
&=\int_{\partial \widetilde{\Sigma}_{R^*}} f' \frac{\partial \rho}{\partial \nu} d\sigma_\mathbb{H} \label{ineq:rvol},
\end{align}
where $d\sigma_\mathbb{H}$ denotes the volume form of the boundary $\partial \widetilde{\Sigma}_{R^*}$ induced from the volume form $dV_\mathbb{H}$ of $\widetilde{\Sigma}_{R^*}$ and $\nu$ denotes the outward unit conormal vector. Since
\begin{equation*}
f'=\frac{1}{k}\frac{\sinh \rho}{(1+\cosh \rho)^k} \hspace{1cm} \mbox{and} \hspace{1cm} \frac{\partial \rho}{\partial \nu} \leq 1,
\end{equation*}
the inequality (\ref{ineq:rvol}) becomes
\begin{equation}
\rvol(\Sigma_{R}) \leq \int_{\widetilde{\Sigma}_{R^*}} \frac{1}{k}\frac{\sinh \rho}{(1+\cosh \rho)^k} d\sigma_\mathbb{H}. \label{ineq:rvol without nu}
\end{equation}
Note that $d\sigma_\mathbb{H}$ can be written as
\begin{equation}
d\sigma_\mathbb{H} = \left( \frac{\sinh \rho}{r}\right)^{k-1} d\sigma_\mathbb{R}, \label{eqn:dsigma}
\end{equation}
where $d\sigma_\mathbb{R}$ denotes the volume form of the boundary $\partial \Sigma_R$ in Euclidean space. From the inequality (\ref{ineq:rvol without nu}) and equality (\ref{eqn:dsigma}), it follows that
\begin{align*}
\rvol(\Sigma_{R}) &\leq \int_{\widetilde{\Sigma}_{R^*}} \frac{1}{k}\frac{\sinh \rho}{(1+\cosh \rho)^k} d\sigma_\mathbb{H}\\
&=\int_{\partial \Sigma_R} \frac{1}{k}\left(\frac{\sinh \rho}{1+\cosh \rho}\right)^k \frac{d\sigma_\mathbb{R}}{r^{k-1}}\\
&=\int_{\partial \Sigma_R} \frac{r}{k} d\sigma_\mathbb{R} \\
&=\frac{R}{k}\rvol(\partial \Sigma_R).
\end{align*}
Therefore, letting $R$ tend to $1$, we obtain
\begin{equation} \label{ineq:limit}
\rvol(\Sigma) \leq \frac{1}{k} \rvol(\partial_{\infty} \Sigma).
\end{equation}
Equality holds in (\ref{ineq:limit}) if and only if $|\nabla_\Sigma \rho (q)|=1$ and $\frac{\partial \rho}{\partial \nu}(q') = \langle\nabla_\Sigma \rho (q'), \nu(q')\rangle=1$ for all $0<R<1$, $q\in \widetilde{\Sigma}_{R^*}$, and $q' \in \partial \widetilde{\Sigma}_{R^*}$, which is equivalent to that $\Sigma$ is totally geodesic in $B^n$ and contains the origin. Therefore equality holds if and only if $\Sigma$ is a $k$-dimensional unit ball $B^k$ centered at the origin in $B^n$.

\end{proof}

For a $k$-dimensional complete proper minimal submanifold $\Sigma$ in the Poincar\'{e} ball model $B^n$, if the Euclidean volume $\rvol(\partial_\infty \Sigma)$ of the ideal boundary $\partial_\infty \Sigma$ is greater than or equal to that of $(k-1)$-dimensional unit sphere, then we have the following sharp isoperimetric inequality.
\begin{cor} \label{cor:isop ineq}
Let $\Sigma$ be a $k$-dimensional complete proper minimal submanifold in the Poincar\'{e} ball model $B^n$. If $\rvol(\partial_{\infty} \Sigma) \geq \rvol(\mathbb{S}^{k-1})=k \omega_k$, then
\begin{equation*}
k^k \omega_k \rvol(\Sigma)^{k-1} \leq \rvol(\partial_{\infty} \Sigma)^k,
\end{equation*}
where equality holds if and only if $\Sigma$ is a $k$-dimensional unit ball $B^k$ in $B^n$.
\end{cor}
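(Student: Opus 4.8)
The plan is to combine the optimal linear isoperimetric inequality from Theorem~\ref{thm:linear isop} with the classical isoperimetric inequality for the ideal boundary viewed as a subset of the sphere at infinity. First I would record the two facts I have available: by Theorem~\ref{thm:linear isop}, $k\,\rvol(\Sigma) \leq \rvol(\partial_\infty \Sigma)$, and the hypothesis $\rvol(\partial_\infty \Sigma) \geq k\omega_k$. The target inequality $k^k \omega_k \rvol(\Sigma)^{k-1} \leq \rvol(\partial_\infty \Sigma)^k$ can be rewritten, after taking $(k-1)$-st powers of the linear bound, as a comparison between $\rvol(\partial_\infty \Sigma)^{k-1}$ and a power of $\rvol(\partial_\infty \Sigma)$; so the whole corollary should reduce to purely elementary manipulation once the linear inequality is in hand.

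Concretely, raising the linear inequality $k\,\rvol(\Sigma) \leq \rvol(\partial_\infty \Sigma)$ to the $(k-1)$-st power gives
\begin{equation*}
k^{k-1}\rvol(\Sigma)^{k-1} \leq \rvol(\partial_\infty \Sigma)^{k-1}.
\end{equation*}
Multiplying both sides by $k\omega_k$ yields
\begin{equation*}
k^k \omega_k \rvol(\Sigma)^{k-1} \leq k\omega_k \,\rvol(\partial_\infty \Sigma)^{k-1}.
\end{equation*}
It therefore suffices to show that the right-hand side is at most $\rvol(\partial_\infty \Sigma)^k$, i.e. that $k\omega_k \leq \rvol(\partial_\infty \Sigma)$. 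But this is exactly the hypothesis $\rvol(\partial_\infty \Sigma) \geq \rvol(\mathbb{S}^{k-1}) = k\omega_k$. Chaining the two displays gives the desired inequality.

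For the equality case, I would trace back through each inequality used. Equality in the final estimate forces equality both in the linear isoperimetric inequality and in the comparison $k\omega_k \leq \rvol(\partial_\infty \Sigma)$. The former, by the equality discussion in Theorem~\ref{thm:linear isop}, already forces $\Sigma$ to be a $k$-dimensional unit ball $B^k$ centered at the origin; one checks that for such $\Sigma$ one has $\rvol(\partial_\infty \Sigma) = \rvol(\mathbb{S}^{k-1}) = k\omega_k$ and $\rvol(\Sigma) = \omega_k$, so the hypothesis is met with equality and both sides of the corollary equal $k^k \omega_k^k$. Conversely the flat ball realizes equality, establishing the characterization.

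I do not anticipate a genuine obstacle here, since the corollary is essentially a formal consequence of the preceding theorem together with the hypothesis on $\rvol(\partial_\infty\Sigma)$; the only point requiring care is the bookkeeping in the equality case, where I must verify that the single geometric condition ``$\Sigma = B^k$'' is simultaneously responsible for equality in \emph{both} of the two inequalities being chained, rather than allowing some intermediate configuration in which the linear inequality is strict but the boundary-volume comparison is sharp (or vice versa). This is settled by the observation that equality in the product inequality propagates to equality in each factor, so the rigid characterization from Theorem~\ref{thm:linear isop} applies verbatim.
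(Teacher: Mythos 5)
Your argument is exactly the paper's proof: raise the linear inequality of Theorem~\ref{thm:linear isop} to the $(k-1)$-st power, multiply by $k\omega_k$, and absorb the hypothesis $\rvol(\partial_\infty\Sigma)\geq k\omega_k$ into the last factor. The paper states the equality case without spelling it out, and your tracing of equality back through both chained inequalities (noting that equality in the linear inequality already forces $\Sigma=B^k$, which in turn saturates the boundary-volume hypothesis) is correct and consistent with the paper's intent.
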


\begin{proof}
From Theorem \ref{thm:linear isop} and the assumption that $\rvol(\partial_{\infty} \Sigma) \geq \rvol(\mathbb{S}^{k-1})=k \omega_k$, it follows that
\begin{align*}
k^k \omega_k \rvol(\Sigma)^{k-1} &\leq k^k \omega_k \left( \frac{1}{k} \rvol(\partial_\infty \Sigma)\right)^{k-1}\\
&= k \omega_k \rvol(\partial_\infty \Sigma)^{k-1}\\
&\leq \rvol(\partial_\infty \Sigma)^k.
\end{align*}
\end{proof}

\begin{rem}
{\rm The conclusion of Corollary \ref{cor:isop ineq} is sharp in the following sense: Assume that $\Sigma$ is totally geodesic in the Poincar\'{e} ball model $B^n$. We see that $\Sigma$ is the intersection of a sphere $S$ centered $O'$ with the unit ball $B^n$ in Euclidean space. Let $l$ be the straight line joining two centers $O$ and $O'$. Denote by $\theta$ the angle between the line $l$ and the line connecting the origin $O$ and any boundary point of $\Sigma$ (Figure 1). Obviously, the radius of the sphere $S$ is $\tan \theta$. Moreover, the Euclidean volume $\rvol(\Sigma)$ and $\rvol(\partial_\infty \Sigma)$ are given by

\begin{figure}
\begin{center}
\includegraphics[height=7cm, width=7cm]{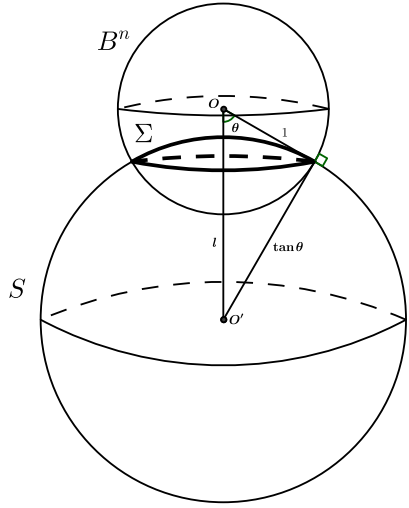}
\end{center}
\caption{a totally geodesic submanifold in $B^n$}
\end{figure}

\begin{align*}
\rvol(\partial_\infty \Sigma) &= \tan^{k-1} \theta  \int_{\mathbb{S}^{k-1}} \sin^{k-1} \left(\frac{\pi}{2}-\theta\right) d\sigma\\
&=k\omega_k \sin^{k-1} \left(\frac{\pi}{2}-\theta\right) \tan^{k-1} \theta \\
&=k\omega_k \sin^{k-1} \theta
\end{align*}
and
\begin{align*}
\rvol(\Sigma) &= \tan^{k} \theta  \int_{\mathbb{S}^{k-1}} \int_0^{\frac{\pi}{2}-\theta} \sin^{k-1} \varphi ~d\varphi d\sigma\\
&=k\omega_k \tan^{k} \theta \int_0^{\frac{\pi}{2}-\theta} \sin^{k-1} \varphi ~ d\varphi  ,
\end{align*}
where $d\sigma$ denotes the volume form of the $(k-1)$-dimensional unit sphere $\mathbb{S}^{k-1}$. Note that the factors $\tan^{k-1} \theta$ and $\tan^{k} \theta$ are the scaling factors of the Euclidean volume of $\partial_\infty \Sigma$ and $\Sigma$, respectively. For such $\Sigma$,
\begin{align*}
k^k\omega_k \rvol(\Sigma)^{k-1} &- \rvol(\partial_\infty \Sigma)^k \\
&= k^k\omega_k \left( k\omega_k \tan^{k} \theta \int_0^{\frac{\pi}{2}-\theta} \sin^{k-1} \varphi ~ d\varphi\right)^{k-1} - (k\omega_k \sin^{k-1} \theta)^k \\
&=k^k{\omega_k}^k \tan^{k(k-1)} \theta ~\left[ \Big(k \int_0^{\frac{\pi}{2}-\theta} \sin^{k-1} \varphi ~ d\varphi \Big)^{k-1} - \cos^{k(k-1)} \theta \right]\\
&=k^{2k-1}{\omega_k}^k \tan^{k(k-1)}\theta ~ \left[ \Big(\int_0^{\frac{\pi}{2}-\theta} \sin^{k-1} \varphi ~ d\varphi \Big)^{k-1} - \left(\frac{\cos^k \theta}{k}\)^{k-1} \right] .
\end{align*}

\noindent Since
\begin{align*}
\int_0^{\frac{\pi}{2}-\theta} \sin^{k-1} \varphi ~ d\varphi &\geq \int_0^{\frac{\pi}{2}-\theta} \sin^{k-1} \varphi \cos \varphi ~ d\varphi = \frac{\cos^k \theta}{k},
\end{align*}
we obtain the {\it reverse} isoperimetric inequality for a totally geodesic submanifold $\Sigma \subset \B^n$ as follows:
\begin{equation*}
k^k \omega_k \rvol(\Sigma)^{k-1} \geq \rvol(\partial_{\infty} \Sigma)^k.
\end{equation*}
Moreover it is not hard to see that equality holds if and only if $\Sigma$ is a $k$-dimensional unit ball $B^k$ in Euclidean space. Hence we conclude that the result of Corollary \ref{cor:isop ineq} is sharp.}
\end{rem}

%%%%%%%%%%%%%%%%%%%%%%%%%%%%%%%%%%%%%%%%%%%%%%%%%%%%%%%%%%%%%%%%%%%%%%%%%%%%%%%%%%%%
\section{Monotonicity}
%%%%%%%%%%%%%%%%%%%%%%%%%%%%%%%%%%%%%%%%%%%%%%%%%%%%%%%%%%%%%%%%%%%%%%%%%%%%%%%%%%%%
One of important properties of a $k$-dimensional minimal submanifold $\Sigma$ in Euclidean or hyperbolic space is the monotonicity: the volume of $\Sigma \cap B_p (r)$ divided by the volume of the $k$-dimensional geodesic ball of radius $r$ is a nondecreasing function of $r$, where $B_p (r)$ is the geodesic ball of radius $r$ with center $p$ in the ambient space for $0<r<\dist (p, \partial \Sigma)$ (see \cite{Anderson, CM, EWW, Simon}).

Let $r(x)$ and $\rho(x)$ be the Euclidean and hyperbolic distance from the origin to $x\in B^n$, respectively. Denote by $B_r$ the Euclidean ball of radius $r$ centered at the origin for $0<r<1$. Recall that the Euclidean ball $B_r$ can be described as the hyperbolic ball $B_\rho$ of radius $\rho$ in the Poincar\'{e} ball model $B^n$, where $\rho=\ln \frac{1+r}{1-r}$. Then the statement of the monotonicity theorem is the following.

\begin{thm}  \label{thm:monotonicity}
Let $\Sigma$ be a $k$-dimensional complete minimal submanifold in $B^n$. Then
the function $\displaystyle{\frac{\rvol(\Sigma\cap B_r)}{r^k}}$ is nondecreasing in $r$ for $0<r<1$. In other words,
\begin{equation*}
\frac{d}{d r} \left( \frac{\rvol(\Sigma \cap B_r)}{r^k} \right) \geq 0,
\end{equation*}
which is equivalent to
\begin{equation*}
\frac{d}{d \rho} \left( \frac{\rvol(\Sigma \cap B_r)}{r^k} \right) \geq 0 .
\end{equation*}
\end{thm}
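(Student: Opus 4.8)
The plan is to reduce the monotonicity to a single first-order differential inequality for the one-variable function $A(r):=\rvol(\Sigma\cap B_r)$, into which I would feed the boundary estimate already produced in the proof of Theorem~\ref{thm:linear isop}. Writing $L(r):=\rvol(\partial\Sigma_r)$ for the $(k-1)$-dimensional Euclidean volume of $\Sigma\cap\partial B_r$, the derivative to be controlled is
\begin{equation*}
\frac{d}{dr}\left(\frac{A(r)}{r^k}\right)=\frac{rA'(r)-kA(r)}{r^{k+1}},
\end{equation*}
so the entire statement comes down to the pointwise inequality $rA'(r)\geq kA(r)$ for $0<r<1$. Moreover, since $\rho=\ln\frac{1+r}{1-r}$ is a strictly increasing smooth function of $r$ with $\frac{d\rho}{dr}=\frac{2}{1-r^2}>0$, the chain rule gives $\frac{d}{d\rho}=\frac{1-r^2}{2}\frac{d}{dr}$, so the $d/dr$ and $d/d\rho$ forms of the conclusion are equivalent and it suffices to argue with the Euclidean variable $r$.

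There are two ingredients. The first is the chain of estimates established inside the proof of Theorem~\ref{thm:linear isop}, which shows, for each fixed $0<r<1$,
\begin{equation*}
A(r)=\rvol(\Sigma_r)\leq\frac{r}{k}\,\rvol(\partial\Sigma_r)=\frac{r}{k}L(r);
\end{equation*}
I would simply invoke it, since it uses only Lemma~\ref{lem:laplacian}, the divergence theorem on the sublevel set $\Sigma\cap B_r$, the bounds $|\nabla_\Sigma\rho|\leq1$ and $\frac{\partial\rho}{\partial\nu}\leq1$, and the identity (\ref{eqn:dsigma}) relating $d\sigma_\mathbb{H}$ and $d\sigma_\mathbb{R}$. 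The second ingredient is the Euclidean coarea formula on $\Sigma$ applied to the restriction $u:=r|_\Sigma$ of the ambient Euclidean distance function. Because $|\nabla^\Sigma u|\leq|\nabla_\mathbb{R}u|=1$, the coarea formula
\begin{equation*}
A(r)=\int_0^r\Big(\int_{\Sigma\cap\partial B_t}\frac{1}{|\nabla^\Sigma u|}\,d\mathcal{H}^{k-1}\Big)\,dt
\end{equation*}
yields, for almost every $r$,
\begin{equation*}
A'(r)=\int_{\Sigma\cap\partial B_r}\frac{1}{|\nabla^\Sigma u|}\,d\mathcal{H}^{k-1}\geq\int_{\Sigma\cap\partial B_r}d\mathcal{H}^{k-1}=L(r).
\end{equation*}

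Combining the two ingredients gives $A'(r)\geq L(r)\geq\frac{k}{r}A(r)$ for almost every $r$, i.e. $rA'(r)-kA(r)\geq0$; as $A$ is nondecreasing, hence absolutely continuous on compact subintervals of $(0,1)$, this integrates to the monotonicity of $A(r)/r^k$ (the range where $A(r)=0$ being trivial). Thus $\frac{d}{dr}(A(r)/r^k)\geq0$, and by the equivalence noted above the $\rho$-version follows as well.

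The main obstacle is analytic rather than geometric: I must justify that $A$ is differentiable almost everywhere with $A'(r)$ equal to the slice integral, that $\partial B_r$ meets $\Sigma$ transversally for almost every $r$ (Sard's theorem), and that a differential inequality valid only almost everywhere upgrades to genuine monotonicity of $A(r)/r^k$. For a complete minimal $\Sigma$ these are standard, using that $u$ is Lipschitz and $A$ has bounded variation. A secondary point to keep honest is that importing the computation from Theorem~\ref{thm:linear isop} requires $\Sigma\cap B_r$ to have finite volume and boundary lying only on $\partial B_r$; this is ensured by completeness together with $\overline{B_r}\subset\subset B^n$, via the same exhaustion used there.
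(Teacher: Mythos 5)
Your proof is correct and is essentially the paper's argument in different bookkeeping: the bound $A(r)\le\frac{r}{k}L(r)$ comes from the same test function and divergence-theorem computation of Lemma \ref{lem:laplacian}, and your Euclidean coarea step $A'(r)\ge L(r)$ plays exactly the role of the paper's identity (\ref{eqn:coarea}) combined with discarding the nonnegative term $\frac{d}{d\rho}\int_{\widetilde{\Sigma}_{\rho}}(1-|\nabla_\Sigma\rho|^2)(1+\cosh\rho)^{-k}\,dV_{\mathbb{H}}$, so your final inequality $rA'(r)\ge kA(r)$ is literally the paper's $\sinh\rho\cdot\frac{d}{d\rho}\rvol(\Sigma_r)\ge k\,\rvol(\Sigma_r)$ since $\sinh\rho\,\frac{d}{d\rho}=r\,\frac{d}{dr}$. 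One small repair: ``nondecreasing'' does not by itself give absolute continuity, but your coarea representation $A(r)=\int_0^r(\cdots)\,dt$ already supplies it, so the integration of the almost-everywhere differential inequality is justified.
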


\begin{proof}
Let $\Sigma_r:=\Sigma \cap B_r$ (possibly empty) be equipped with the Euclidean volume form $dV_\mathbb{R}$. Similarly, let $\widetilde{\Sigma}_{\rho}:= \Sigma\cap B_\rho$ (possibly empty) be equipped with the hyperbolic volume form $dV_\mathbb{H}$. Consider a function $f$ on $\Sigma$ defined as
\begin{equation*}
f=-\frac{1}{k(k-1)} \cdot \frac{1}{(1+\cosh \rho)^{k-1}} .
\end{equation*}
Then Lemma \ref{lem:laplacian} gives us
\begin{align}
\rvol(\Sigma_{r}) &= \int_{\Sigma_{r}} d V_{\mathbb{R}} \nonumber \\
&= \int_{\widetilde{\Sigma}_{\rho}} \frac{1}{(1+ \cosh \rho)^k} d V_{\mathbb{H}} \nonumber \\
&\leq \int_{\widetilde{\Sigma}_{\rho}} \triangle_\Sigma f d V_{\mathbb{H}} \label{ineq:a} .
\end{align}
Applying the divergence theorem, we get
\begin{align*}
\int_{\widetilde{\Sigma}_{\rho}} \triangle_\Sigma f d V_{\mathbb{H}} &= \int_{\partial \widetilde{\Sigma}_{\rho}} f' \frac{\partial \rho}{\partial \nu} d \sigma_{\mathbb{H}}\\
&\leq \int_{\partial \widetilde{\Sigma}_{\rho}} f' ~|\nabla_\Sigma \rho| ~d \sigma_{\mathbb{H}},
\end{align*}
where $d\sigma_\mathbb{H}$ is the hyperbolic volume form of $\partial \widetilde{\Sigma}_{\rho}$ and $\nu$ is the outward unit conormal vector. Since
\begin{align*}
f'=\frac{1}{k}\frac{\sinh \rho}{(1+\cosh \rho)^k},
\end{align*}
we see that
\begin{align}
\int_{\widetilde{\Sigma}_{\rho}} \triangle_\Sigma f d V_{\mathbb{H}} \leq \frac{1}{k} \int_{\partial \widetilde{\Sigma}_{\rho}} \frac{\sinh \rho}{(1+\cosh \rho)^k} ~|\nabla_\Sigma \rho|~ d \sigma_{\mathbb{H}} . \label{ineq:b}
\end{align}
On the other hand, the coarea formula says that
\begin{align}
\frac{d}{d \rho} \int_{\widetilde{\Sigma}_{\rho}} \frac{|\nabla_\Sigma \rho|^2}{(1+\cosh \rho)^k} ~ d V_{\mathbb{H}} =
\int_{\partial \widetilde{\Sigma}_{\rho}} \frac{|\nabla_\Sigma \rho|}{(1+\cosh \rho)^k} ~ d \sigma_{\mathbb{H}} . \label{eqn:coarea}
\end{align}
Therefore combining the inequalities (\ref{ineq:a}), (\ref{ineq:b}) and the equality (\ref{eqn:coarea}), we obtain
\begin{align*}
\rvol(\Sigma_r) %&\leq \int_{\widetilde{\Sigma}_{\rho}} \triangle_\Sigma f d V_{\mathbb{H}}\\
%&\leq \int_{\partial \widetilde{\Sigma}_{\rho}} f'  ~|\nabla_\Sigma \rho| ~d \sigma_{\mathbb{H}}\\
&\leq \frac{1}{k} \int_{\partial \widetilde{\Sigma}_{\rho}}\frac{\sinh \rho}{(1+\cosh \rho)^k} ~|\nabla_\Sigma \rho|~ d \sigma_{\mathbb{H}} \\
&=\frac{\sinh \rho}{k} \int_{\partial \widetilde{\Sigma}_{\rho}}\frac{|\nabla_\Sigma \rho|}{(1+\cosh \rho)^k} ~ d \sigma_{\mathbb{H}}\\
&=\frac{\sinh \rho}{k} \frac{d}{d \rho} \int_{\widetilde{\Sigma}_{\rho}} \frac{|\nabla_\Sigma \rho|^2}{(1+\cosh \rho)^k} ~ d V_{\mathbb{H}}  \\
&=\frac{\sinh \rho}{k}  \left[\frac{d}{d \rho} \int_{\widetilde{\Sigma}_{\rho}} \frac{1}{(1+\cosh \rho)^k} ~ d V_{\mathbb{H}} -
\frac{d}{d \rho} \int_{\widetilde{\Sigma}_{\rho}} \frac{1-|\nabla_\Sigma \rho|^2}{(1+\cosh \rho)^k} ~ d V_{\mathbb{H}} \right] \\
&\leq \frac{\sinh \rho}{k} \frac{d}{d \rho} \int_{\widetilde{\Sigma}_{\rho}} \frac{1}{(1+\cosh \rho)^k} ~ d V_{\mathbb{H}} .
\end{align*}
Since
\begin{align*}
\int_{\widetilde{\Sigma}_{\rho}} \frac{1}{(1+\cosh \rho)^k} ~ d V_{\mathbb{H}} = \int_{\Sigma_r} dV_\mathbb{R},
\end{align*}
we finally get
\begin{align*}
\rvol(\Sigma_r) &\leq \frac{\sinh \rho}{k} \frac{d}{d \rho} \int_{\widetilde{\Sigma}_{\rho}} \frac{1}{(1+\cosh \rho)^k} ~ d V_{\mathbb{H}} \\
&=\frac{\sinh \rho}{k} \frac{d}{d \rho} \int_{\Sigma_r} dV_\mathbb{R}\\
&=\frac{\sinh \rho}{k} \frac{d}{d \rho} \rvol (\Sigma_r)
\end{align*}
for any $0<r<1$. This implies that
\begin{align*}
\frac{d}{d \rho} \left( \frac{\rvol(\Sigma_r)}{r^k} \right) &= \frac{d}{d \rho} \left( \frac{\rvol(\Sigma_r)}{\tanh^k \frac{\rho}{2}} \right)\\
&=\frac{1}{\tanh^{k+1} \frac{\rho}{2}} \left[ \tanh \frac{\rho}{2}\cdot \frac{d}{d \rho}\rvol(\Sigma_r) - \frac{k\rvol(\Sigma_r)}{2\cosh^2 \frac{\rho}{2}} \right]\\
&=\frac{1}{\tanh^{k+1} \frac{\rho}{2}} \left[ \frac{\sinh \rho}{1+\cosh \rho}\cdot \frac{d}{d \rho}\rvol(\Sigma_r) - \frac{k\rvol(\Sigma_r)}{1+\cosh \rho} \right]\\
&= \frac{1}{\tanh^{k+1} \frac{\rho}{2}} \cdot \frac{1}{1+\cosh \rho} \left[ \sinh \rho \cdot \frac{d}{d \rho}\rvol(\Sigma_r) - k\rvol(\Sigma_r) \right] \\
&\geq 0,
\end{align*}
which completes the proof.

\end{proof}
Recall that the {\it density} $\Theta (\Sigma, p)$ of a $k$-dimensional submanifold $\Sigma$ in a Riemannian manifold $M$ at a point $p\in M$ is defined to be
\begin{eqnarray*}
\Theta (\Sigma, p) = \lim_{\varepsilon \rightarrow 0}\frac{\vol
(\Sigma \cap B_\varepsilon (p))}{\omega_k \varepsilon^k},
\end{eqnarray*}
where $B_\varepsilon (p)$ is the geodesic ball of $M$ with radius $\varepsilon$ and center $p$. As a consequence of Theorem \ref{thm:monotonicity}, we have
\begin{cor} \label{cor:isop ineq from area bound}
Let $\Sigma$ be a $k$-dimensional complete proper minimal submanifold containing the origin in the Poincar\'{e} ball model $B^n$. Then
\begin{equation*}
\rvol(\Sigma) \geq \omega_k = \rvol(B^k).
\end{equation*}
\end{cor}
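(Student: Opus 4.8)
The plan is to combine the monotonicity theorem (Theorem \ref{thm:monotonicity}) with a computation of the Euclidean density of $\Sigma$ at the origin. By Theorem \ref{thm:monotonicity}, the quotient $\frac{\rvol(\Sigma \cap B_r)}{r^k}$ is a nondecreasing function of $r$ on $(0,1)$, so for every fixed radius its value dominates the limiting value as $r \to 0^+$. Thus the argument reduces to two things: identifying this limit, and then letting $r \to 1^-$.

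First I would evaluate $\lim_{r \to 0^+} \frac{\rvol(\Sigma \cap B_r)}{r^k}$. Since $\Sigma$ is a smooth $k$-dimensional submanifold of $B^n \subset \mathbb{R}^n$ passing through the origin, the origin is a regular point of $\Sigma$ regarded in Euclidean space. Near such a point $\Sigma$ agrees to first order with its Euclidean tangent $k$-plane, so $\rvol(\Sigma \cap B_r) = \omega_k r^k + o(r^k)$ as $r \to 0$. In the notation introduced before the corollary this says precisely that the Euclidean density satisfies $\Theta(\Sigma, O) = \lim_{r \to 0} \frac{\rvol(\Sigma \cap B_r)}{\omega_k r^k} = 1$, and therefore $\lim_{r \to 0^+} \frac{\rvol(\Sigma \cap B_r)}{r^k} = \omega_k$.

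Combining these, monotonicity gives $\frac{\rvol(\Sigma \cap B_r)}{r^k} \geq \omega_k$, that is $\rvol(\Sigma \cap B_r) \geq \omega_k r^k$, for every $0 < r < 1$. Finally I would let $r \to 1^-$. Since $B^n = \bigcup_{r<1} B_r$, the sets $\Sigma \cap B_r$ increase to $\Sigma$ as $r \uparrow 1$, so monotone convergence yields $\rvol(\Sigma \cap B_r) \to \rvol(\Sigma)$, while $r^k \to 1$; passing the inequality to the limit gives $\rvol(\Sigma) \geq \omega_k = \rvol(B^k)$, as desired. (Here completeness and properness enter only to make $\Sigma$ a smooth submanifold meeting each $B_r$ in a set of finite Euclidean volume; the exhaustion and the monotone limit are automatic.)

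The only genuinely delicate point is the evaluation of the density at the origin. One must be sure that the origin is a smooth point of multiplicity one, so that $\Theta(\Sigma, O)$ is exactly $1$ rather than a larger integer or a noninteger arising from a singularity — this is exactly where the hypothesis that $\Sigma$ is a smooth minimal submanifold \emph{containing} the origin is used. Everything else is a direct consequence of monotonicity and two elementary limits. For the rigidity statement, one would trace back through Theorem \ref{thm:monotonicity}: equality in the lower bound forces the monotone quotient to be constant and hence $|\nabla_\Sigma \rho| \equiv 1$ along $\Sigma$, which forces $\Sigma$ to be totally geodesic through the origin, i.e.\ a unit $k$-ball $B^k$.
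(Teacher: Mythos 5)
Your argument is correct and is essentially the paper's proof: monotonicity of $\rvol(\Sigma\cap B_r)/r^k$ from Theorem \ref{thm:monotonicity}, the value $\omega_k\,\Theta(\Sigma,O)$ of the limit at $r\to 0^+$, and the limit $\rvol(\Sigma)$ as $r\to 1^-$. The only difference is cosmetic: the paper needs only $\Theta(\Sigma,O)\ge 1$ (a larger density would only strengthen the bound), so the point you flag as delicate --- that the origin has multiplicity exactly one --- is not actually needed for the inequality.
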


\begin{proof}
Since the function $\frac{\rvol(\Sigma_r)}{r^k}$ is nondecreasing in $r$ by Theorem \ref{thm:monotonicity},
\begin{align*}
\rvol(\Sigma) = \lim_{r\rightarrow 1-} \frac{\rvol(\Sigma_r)}{r^k} \geq \lim_{r\rightarrow 0+} \frac{\rvol(\Sigma_r)}{r^k} = \omega_k \Theta(\Sigma, O) \geq \omega_k.
\end{align*}
%where $\Theta(\Sigma, O)$ denotes the density of $\Sigma$ at the origin $O$.
\end{proof}

Suppose that a $k$-dimensional complete proper minimal submanifold $\Sigma$ contains the origin in the Poincar\'{e} ball model. Combining Theorem \ref{thm:linear isop} and Corollary \ref{cor:isop ineq from area bound}, we further obtain
\begin{align*}
\rvol(\partial_\infty \Sigma) &\geq k \rvol(\Sigma)\\
&\geq k \rvol (B^k) \\
&= \rvol (\mathbb{S}^{k-1}).
\end{align*}
Thus we see that $\rvol(\partial_{\infty} \Sigma) \geq \rvol(\mathbb{S}^{k-1})=k \omega_k$. Hence the following can be derived from Corollary \ref{cor:isop ineq}.
\begin{cor} \label{cor:isop origin}
Let $\Sigma$ be a $k$-dimensional complete proper minimal submanifold containing the origin in the Poincar\'{e} ball model $B^n$. Then
\begin{equation*}
k^k \omega_k \rvol(\Sigma)^{k-1} \leq \rvol(\partial_{\infty} \Sigma)^k,
\end{equation*}
where equality holds if and only if $\Sigma$ is a $k$-dimensional unit ball $B^k$ in $B^n$.
\end{cor}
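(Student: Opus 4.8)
The plan is to recognize this as a direct consequence of Corollary \ref{cor:isop ineq}, whose only nontrivial hypothesis is the lower bound $\rvol(\partial_\infty \Sigma) \geq \rvol(\mathbb{S}^{k-1}) = k\omega_k$. So the entire task reduces to verifying that this boundary-volume bound holds automatically once we assume that $\Sigma$ contains the origin. Everything else is already packaged in the earlier results.

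First I would invoke Corollary \ref{cor:isop ineq from area bound}, which applies precisely because $\Sigma$ passes through the origin, to obtain the interior volume bound $\rvol(\Sigma) \geq \omega_k = \rvol(B^k)$. Next I would feed this into the optimal linear isoperimetric inequality of Theorem \ref{thm:linear isop}, namely $\rvol(\partial_\infty \Sigma) \geq k\,\rvol(\Sigma)$, to chain the two estimates together:
\begin{align*}
\rvol(\partial_\infty \Sigma) &\geq k\,\rvol(\Sigma) \geq k\,\rvol(B^k) = k\omega_k = \rvol(\mathbb{S}^{k-1}).
\end{align*}
This is exactly the hypothesis required by Corollary \ref{cor:isop ineq}, so the desired inequality $k^k \omega_k \rvol(\Sigma)^{k-1} \leq \rvol(\partial_\infty \Sigma)^k$ follows immediately by applying that corollary.

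For the equality characterization, I would simply quote the rigidity statement already contained in Corollary \ref{cor:isop ineq}: equality in $k^k \omega_k \rvol(\Sigma)^{k-1} \leq \rvol(\partial_\infty \Sigma)^k$ forces $\Sigma$ to be a $k$-dimensional unit ball $B^k$ in $B^n$, and conversely such a $B^k$ (being totally geodesic and centered at the origin) saturates all of the inequalities in the chain above. Since the unit ball $B^k$ automatically contains the origin, no compatibility issue arises between the equality case here and the containment hypothesis.

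There is no genuine obstacle in this argument, since both ingredients have already been proved; the only point requiring care is logical rather than computational, namely confirming that the containment hypothesis is what makes Corollary \ref{cor:isop ineq from area bound} applicable and hence what unlocks the threshold $\rvol(\partial_\infty \Sigma) \geq k\omega_k$. If I wanted to be self-contained I would note that without the origin hypothesis this threshold can fail (as remarked in the introduction, the lower volume bound is false even for general totally geodesic submanifolds), so the containment assumption is doing real work and cannot simply be dropped.
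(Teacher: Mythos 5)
Your proposal is correct and follows exactly the paper's own route: chain Corollary \ref{cor:isop ineq from area bound} with Theorem \ref{thm:linear isop} to obtain $\rvol(\partial_\infty\Sigma)\geq k\,\rvol(\Sigma)\geq k\omega_k=\rvol(\mathbb{S}^{k-1})$, then apply Corollary \ref{cor:isop ineq} together with its rigidity statement. No discrepancies to report.
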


%%%%%%%%%%%%%%%%%%%%%%%%%%%%%%%%%%%%%%%%%%%%%%%%%%%%%%%%%%%%%%%%%%%%%%%%%%%%%%%%%%%%
\section{M\"{o}bius volume}
%%%%%%%%%%%%%%%%%%%%%%%%%%%%%%%%%%%%%%%%%%%%%%%%%%%%%%%%%%%%%%%%%%%%%%%%%%%%%%%%%%%%

In \cite{Min}, the first author introduced the M\"{o}bius volume of a compact submanifold of $\mathbb{S}^{n-1} = \partial_\infty \mathbb{H}^n$ as follows.
\begin{df} \label{df:Min}
Let $\Gamma$ be a compact submanifold of
$\mathbb{S}^{n-1}$. Let {M{\"{o}}b}($\mathbb{S}^{n-1}$) be the group of all {M{\"{o}}bius}
transformations of $\mathbb{S}^{n-1}$. The {\it M{\"{o}}bius volume} $\mvol(\Gamma)$ of $\Gamma$
is defined to be
\begin{equation*}
\mvol(\Gamma)=\sup\{{\vol}_\mathbb{R} (g(\Gamma))
\,|\, {g \in {\text{\rm M{\"{o}}b}}(\mathbb{S}^{n-1})}\}.
\end{equation*}
\end{df}
\noindent According to the definition of Li and Yau \cite{LY}, the M{\"{o}}bius volume of $\Gamma$ is the same as the $(n-1)$-conformal volume of the inclusion of $\Gamma$ into $\mathbb{S}^{n-1}$. Using this notation, the first author was able to prove the embeddedness of a complete proper minimal submanifold in hyperbolic space. In particular, he gave a lower bound for the M\"{o}bius volume of a compact submanifold $\Gamma \in \mathbb{S}^{n-1}$.
\begin{prop}[\cite{Min}] \label{thm:min}
Let $\Gamma$ be a $k$-dimensional compact submanifold of $\mathbb{S}^{n-1}$. Then
\begin{equation*}
\mvol(\Gamma)\geq \rvol(\mathbb{S}^k),
\end{equation*}
where equality holds if $\Gamma$ is a $k$-dimensional unit sphere $\mathbb{S}^k$.
\end{prop}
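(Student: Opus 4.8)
The plan is to prove the inequality by a concentration (blow-up) argument in the spirit of Li and Yau's lower bound for the conformal volume, and to settle the equality case by a direct observation about round subspheres. I would first dispose of the equality assertion. If $\Gamma$ is a great $k$-sphere $\mathbb{S}^k \subset \mathbb{S}^{n-1}$, then each $g \in \text{M\"{o}b}(\mathbb{S}^{n-1})$ sends $\Gamma$ to a round $k$-sphere contained in $\mathbb{S}^{n-1}$, since M\"{o}bius transformations preserve the family of round subspheres. A round $k$-sphere lying inside the unit sphere $\mathbb{S}^{n-1}$ has Euclidean radius at most $1$, with equality precisely for great subspheres; hence $\rvol(g(\Gamma)) \le \rvol(\mathbb{S}^k)$ for every $g$, with equality at $g=\mathrm{id}$. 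Therefore $\mvol(\mathbb{S}^k)=\rvol(\mathbb{S}^k)$, which gives the equality clause.

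For the inequality, fix any point $q_0 \in \Gamma$ and let $N$ be its antipode. Let $\pi:\mathbb{S}^{n-1}\setminus\{N\}\to\mathbb{R}^{n-1}$ be stereographic projection from $N$, normalized so that $\pi(q_0)=0$; recall that $\pi$ is conformal and the spherical metric pulls back to $\tfrac{4}{(1+|x|^2)^2}$ times the Euclidean metric. Write $\widetilde{\Gamma}=\pi(\Gamma)$, a smooth compact $k$-submanifold of $\mathbb{R}^{n-1}$ through the origin, and let $P=T_0\widetilde{\Gamma}$ be its tangent $k$-plane. The Euclidean dilations $D_\lambda(x)=\lambda x$ correspond, through $\pi$, to M\"{o}bius transformations $g_\lambda=\pi^{-1}\circ D_\lambda\circ\pi \in \text{M\"{o}b}(\mathbb{S}^{n-1})$ fixing $q_0$ and $N$. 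Using the conformal factor together with the change of variables $x=\lambda y$ on the $k$-dimensional set $\widetilde{\Gamma}$, I would compute
\[
\rvol(g_\lambda(\Gamma))=\int_{\widetilde{\Gamma}}\left(\frac{2\lambda}{1+\lambda^2|y|^2}\right)^k d\mathcal{H}^k(y).
\]

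The right-hand side is an integral against a kernel that concentrates at $y=0$ as $\lambda\to\infty$. The key step is to show that this integral converges to the contribution of the tangent plane. Parametrizing $\widetilde{\Gamma}$ near the origin as a graph over $P$ and rescaling $y=w/\lambda$, the graph flattens to $P$ while the part of $\widetilde{\Gamma}$ staying a fixed distance from $0$ contributes at most $O(\lambda^{-k})$, so that
\[
\lim_{\lambda\to\infty}\rvol(g_\lambda(\Gamma))=\int_{P}\left(\frac{2}{1+|w|^2}\right)^k d\mathcal{H}^k_P(w)=\rvol(\mathbb{S}^k),
\]
the last equality holding because $\pi^{-1}(P)$ is precisely a great $k$-sphere, whose spherical volume is computed by exactly this conformal integral. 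Since $\mvol(\Gamma)=\sup_g\rvol(g(\Gamma))$ dominates this limit, the bound $\mvol(\Gamma)\ge\rvol(\mathbb{S}^k)$ follows. The main obstacle is making the concentration limit rigorous: one must control the graph parametrization uniformly near $q_0$, justify passing to the limit inside the integral via dominated convergence together with the decay of the kernel away from the origin, and verify the identification of $\pi^{-1}(P)$ as a great subsphere of the correct volume. (If $\Gamma$ is only immersed, the blow-up at $q_0$ still produces the tangent plane with multiplicity at least one, so the lower bound is unaffected.)
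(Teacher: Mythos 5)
Your argument is correct, and it is worth noting that the paper itself does not prove this proposition: it is imported from the reference \cite{Min}, and the only proof the paper supplies is the ``second proof'' of the special case $\Gamma=\partial_\infty\Sigma$ (Corollary \ref{cor:mvol}), obtained by combining the linear isoperimetric inequality (Theorem \ref{thm:linear isop}) with the monotonicity theorem (Theorem \ref{thm:monotonicity}) after moving a point of maximal density to the origin. Your route is the classical Li--Yau concentration argument for conformal volume: blow up at a point of $\Gamma$ by the one-parameter family of M\"{o}bius transformations corresponding to Euclidean dilations in a stereographic chart, and show that the volumes $\rvol(g_\lambda(\Gamma))$ converge to the volume of the great $k$-sphere determined by the tangent plane. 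This buys strictly more than the paper's internal argument: it applies to an arbitrary compact $k$-submanifold of $\mathbb{S}^{n-1}$ (indeed to immersions, with the limit equal to $\rvol(\mathbb{S}^k)$ times the multiplicity), whereas the monotonicity route only covers ideal boundaries of complete proper minimal submanifolds. Conversely, the paper's route gives the sharper weighted bound of Theorem \ref{thm:density} in that special case. Your treatment of the equality clause (M\"{o}bius maps send round $k$-spheres to round $k$-spheres of Euclidean radius at most $1$ inside $\mathbb{S}^{n-1}$) is exactly right, and note the proposition only claims the ``if'' direction.

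One small technical point to tighten: your bound of $O(\lambda^{-k})$ for the contribution of $\{|y|\ge\delta\}$ uses $\mathcal{H}^k(\widetilde{\Gamma})<\infty$, which fails if the projection point $N$ happens to lie on $\Gamma$ (e.g.\ when $\Gamma$ is itself a great sphere), since then $\pi(\Gamma\setminus\{N\})$ has infinite Hausdorff measure. The fix is immediate: either project from a point off $\Gamma$ and dilate about $\pi(q_0)$ rather than about the origin, or rewrite the far-field integral against the spherical measure,
\begin{equation*}
\int_{\widetilde{\Gamma}\cap\{|y|\ge\delta\}}\Bigl(\frac{2\lambda}{1+\lambda^2|y|^2}\Bigr)^k d\mathcal{H}^k(y)
=\int_{\Gamma\cap\pi^{-1}\{|y|\ge\delta\}}\Bigl(\frac{\lambda(1+|y|^2)}{1+\lambda^2|y|^2}\Bigr)^k dV_{\mathbb{S}}
\le \Bigl(\frac{1+\delta^2}{\lambda\delta^2}\Bigr)^k \vol(\Gamma),
\end{equation*}
which tends to $0$ in either case. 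With that adjustment, and the dominated convergence argument you sketch for the rescaled graph (the kernel $(1+|v|^2)^{-k}$ is integrable on $\mathbb{R}^k$), the proof is complete.
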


Given a $k$-dimensional complete proper minimal submanifold $\Sigma \subset B^n$, we estimate a lower bound for the M\"{o}bius volume of the ideal boundary $\partial_\infty \Sigma$.
\begin{thm} \label{thm:density}
Let $\Sigma$ be a $k$-dimensional complete proper minimal submanifold in $B^n$. Then
\begin{equation*}
\mvol(\partial_\infty \Sigma)\geq \rvol(\mathbb{S}^{k-1}) \cdot \max_{p\in {\Sigma}} \Theta (\Sigma, p) .
\end{equation*}
\end{thm}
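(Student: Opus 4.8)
The plan is to exploit the fact that the Möbius transformations of $\mathbb{S}^{n-1}=\partial_\infty\mathbb{H}^n$ are precisely the boundary values of the isometries of $\mathbb{H}^n$ in the Poincaré ball model, and to use such an isometry to recenter $\Sigma$ at the origin so that the two main results of the previous sections, Theorem \ref{thm:linear isop} and Theorem \ref{thm:monotonicity}, both become applicable. The point is that $\mvol$ is a supremum over the whole Möbius group, so we are free to test it against the one transformation that moves a prescribed point of $\Sigma$ to $O$.

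First I would fix an arbitrary point $p\in\Sigma$ and choose an isometry $\bar{g}$ of $B^n$ with $\bar{g}(p)=O$; such a $\bar{g}$ always exists, and its restriction $g=\bar{g}|_{\mathbb{S}^{n-1}}$ is a Möbius transformation of $\mathbb{S}^{n-1}$. Since isometries preserve minimality, completeness and properness, the image $\bar{g}(\Sigma)$ is again a $k$-dimensional complete proper minimal submanifold, now containing the origin, and its ideal boundary is $\partial_\infty(\bar{g}(\Sigma))=g(\partial_\infty\Sigma)$. Because the density $\Theta$ is defined through geodesic balls and the hyperbolic volume, it is manifestly invariant under $\bar{g}$, so $\Theta(\bar{g}(\Sigma),O)=\Theta(\Sigma,p)$.

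Next I would chain the three ingredients. By the definition of the Möbius volume, $\mvol(\partial_\infty\Sigma)\ge\rvol(g(\partial_\infty\Sigma))=\rvol(\partial_\infty(\bar{g}(\Sigma)))$. Applying the linear isoperimetric inequality of Theorem \ref{thm:linear isop} to $\bar{g}(\Sigma)$ gives $\rvol(\partial_\infty(\bar{g}(\Sigma)))\ge k\,\rvol(\bar{g}(\Sigma))$. Finally, since $\bar{g}(\Sigma)$ contains the origin, the monotonicity of $\rvol(\bar{g}(\Sigma)\cap B_r)/r^k$ from Theorem \ref{thm:monotonicity} yields, exactly as in the proof of Corollary \ref{cor:isop ineq from area bound},
\[
\rvol(\bar{g}(\Sigma))=\lim_{r\to1^-}\frac{\rvol(\bar{g}(\Sigma)\cap B_r)}{r^k}\ \ge\ \lim_{r\to0^+}\frac{\rvol(\bar{g}(\Sigma)\cap B_r)}{r^k}=\omega_k\,\Theta(\bar{g}(\Sigma),O).
\]
Combining these three inequalities and recalling $k\omega_k=\rvol(\mathbb{S}^{k-1})$ gives $\mvol(\partial_\infty\Sigma)\ge\rvol(\mathbb{S}^{k-1})\,\Theta(\Sigma,p)$, and since $p\in\Sigma$ was arbitrary, taking the maximum over $p$ completes the argument.

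The step requiring the most care is the correspondence between Möbius transformations of the boundary sphere and isometries of $\mathbb{H}^n$, together with checking that this recentering transports all the relevant data correctly: that $\bar{g}(\Sigma)$ stays complete, proper and minimal, that its ideal boundary is exactly $g(\partial_\infty\Sigma)$ so that $\rvol(g(\partial_\infty\Sigma))$ is a legitimate competitor in the supremum defining $\mvol$, and that the density is genuinely preserved. Once this invariance is established the inequalities are immediate from the earlier sections; the only remaining point is that the maximum over $p$ is attained (or, if one prefers, that the conclusion may be phrased with a supremum), which is harmless since the pointwise bound holds for every $p\in\Sigma$.
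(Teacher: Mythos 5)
Your argument is correct and is essentially identical to the paper's: recenter a point of $\Sigma$ at the origin by a hyperbolic isometry (viewed as a M\"obius transformation in the supremum defining $\mvol$), then chain the definition of $\mvol$, Theorem \ref{thm:linear isop}, and the monotonicity/density bound from Theorem \ref{thm:monotonicity}. The only cosmetic difference is that you run the chain for an arbitrary $p$ and take the maximum at the end, whereas the paper first fixes a point where the (integer-valued, hence attained) maximum density occurs.
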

\begin{proof}
Since $\Sigma$ is proper in $B^n$, $\max\limits_{p\in {\Sigma}} \Theta (\Sigma, p) $ is finite. Moreover the maximum is attained in $\Sigma$ because the density $\Theta (\Sigma, p)$ is integer-valued there. Now we may assume that $\max\limits_{p\in {\Sigma}} \Theta (\Sigma, p)$ is attained at $q\in \Sigma$. Take an isometry $\varphi$ of hyperbolic space $\mathbb{H}^n$ such that $\varphi(q)=O$. Since the group of all isometries of $\mathbb{H}^n$ is isomorphic to {M{\"{o}}b}($\mathbb{S}^{n-1}$), we may consider $\varphi$ as an element of {M{\"{o}}b}($\mathbb{S}^{n-1}$) (see \cite{Ratcliffe}). Then by applying Theorem \ref{thm:linear isop} and Theorem \ref{thm:monotonicity}, we see that
\begin{align*}
\mvol(\partial_\infty \Sigma) &\geq \rvol(\partial_\infty \varphi(\Sigma))\\
&\geq k \rvol (\varphi(\Sigma)) \\
&\geq k \omega_k \Theta(\varphi(\Sigma), O) \\
&= \rvol(\mathbb{S}^{k-1}) \Theta(\Sigma, q),
\end{align*}
where we used the invariance of the density under an isometry of hyperbolic space in the last equality. This completes the proof.

\end{proof}

Since $\max\limits_{ p\in \Sigma}\Theta (\Sigma, p) \geq 1$, Theorem \ref{thm:density} gives another proof of the above Proposition \ref{thm:min} for $\Gamma = \partial_\infty \Sigma$.
\begin{cor} \label{cor:mvol}
Let $\Sigma$ be a $k$-dimensional complete proper minimal submanifold in $B^n$. Then
\begin{equation*}
\mvol(\partial_\infty \Sigma)\geq \rvol(\mathbb{S}^{k-1}).
\end{equation*}
\end{cor}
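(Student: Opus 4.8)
The plan is to obtain this statement directly from Theorem \ref{thm:density}, which already bounds $\mvol(\partial_\infty \Sigma)$ from below by $\rvol(\mathbb{S}^{k-1})$ times the maximal density $\max_{p\in \Sigma} \Theta(\Sigma, p)$. Thus the whole content of the corollary reduces to the elementary fact that this maximal density is at least $1$, after which the inequality drops out by substitution.

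To verify that $\max_{p\in \Sigma}\Theta(\Sigma, p) \geq 1$, I would fix an arbitrary point $p \in \Sigma$ and estimate $\Theta(\Sigma, p)$ from below. Since $\Sigma$ is a smooth $k$-dimensional submanifold, in a neighborhood of $p$ it is a graph over its tangent space $T_p\Sigma$, so as $\varepsilon \to 0$ the piece $\Sigma \cap B_\varepsilon(p)$ is approximated to first order by the flat $k$-disc $T_p\Sigma \cap B_\varepsilon(p)$ of Euclidean volume $\omega_k \varepsilon^k$. Hence the ratio $\vol(\Sigma \cap B_\varepsilon(p))/(\omega_k \varepsilon^k)$ converges to $1$ (and would exceed $1$ if several sheets passed through $p$), giving $\Theta(\Sigma, p) \geq 1$. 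Taking the supremum over $p \in \Sigma$ then forces $\max_{p\in \Sigma}\Theta(\Sigma, p) \geq 1$; that the maximum, rather than merely a supremum, is attained and finite was already recorded in the proof of Theorem \ref{thm:density}, using properness together with the integrality of the density at the maximizing point.

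Combining these two ingredients yields
\begin{equation*}
\mvol(\partial_\infty \Sigma) \geq \rvol(\mathbb{S}^{k-1}) \cdot \max_{p\in \Sigma}\Theta(\Sigma, p) \geq \rvol(\mathbb{S}^{k-1}),
\end{equation*}
which is precisely the assertion. There is essentially no obstacle to overcome here: all of the substantive work lies in Theorem \ref{thm:density} (and, behind it, in the linear isoperimetric inequality of Theorem \ref{thm:linear isop} and the monotonicity of Theorem \ref{thm:monotonicity}), while the remaining input — that a smooth minimal submanifold has density at least one at each of its points — is the standard local tangent-plane comparison. Accordingly, I expect the proof to be a single short deduction rather than a computation.
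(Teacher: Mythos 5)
Your proposal is correct and matches the paper exactly: the corollary is stated there as an immediate consequence of Theorem \ref{thm:density} together with the observation that $\max_{p\in\Sigma}\Theta(\Sigma,p)\geq 1$. The tangent-plane justification you give for the density bound is the standard one and is consistent with what the paper implicitly relies on.
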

Furthermore if $\Sigma$ has a self-intersection point $p$ in $\Sigma$, then the density $\Theta (\Sigma, p) \geq 2$. Therefore
\begin{cor}
Let $\Sigma$ be a $k$-dimensional complete proper minimal submanifold in $B^n$. If $\Sigma$ has a self-intersection point in $\Sigma\setminus \partial_\infty \Sigma$, then
\begin{equation*}
\mvol(\partial_\infty \Sigma)\geq 2\rvol(\mathbb{S}^{k-1}).
\end{equation*}
\end{cor}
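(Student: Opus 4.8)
The plan is to read this off directly from Theorem \ref{thm:density}, whose lower bound already carries the factor $\max_{p\in\Sigma}\Theta(\Sigma,p)$. Hence it is enough to verify that the hypothesis forces $\max_{p\in\Sigma}\Theta(\Sigma,p)\geq 2$; the conclusion then follows by substituting this bound into Theorem \ref{thm:density}.

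First I would fix a self-intersection point $p\in\Sigma\setminus\partial_\infty\Sigma$. The crucial point is that $p$ is an interior point of hyperbolic space, bounded away from the sphere at infinity, so that for all sufficiently small $\varepsilon>0$ the geodesic ball $B_\varepsilon(p)$ is an honest ball and the density $\Theta(\Sigma,p)$ is well defined by the usual limit. Since $p$ is a self-intersection point, two distinct local sheets of the immersed submanifold $\Sigma$ pass through $p$. Each such sheet is itself a $k$-dimensional minimal submanifold through $p$, and by monotonicity the density of a minimal submanifold at any of its points is at least $1$. Because the two sheets are distinct (and therefore, by unique continuation for minimal submanifolds, coincide only on a set that is negligible at the infinitesimal scale), their contributions to $\vol(\Sigma\cap B_\varepsilon(p))$ add in the limit $\varepsilon\to 0$. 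This gives $\Theta(\Sigma,p)\geq 1+1=2$, and a fortiori $\max_{p\in\Sigma}\Theta(\Sigma,p)\geq 2$.

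Substituting into Theorem \ref{thm:density} then yields
\[
\mvol(\partial_\infty\Sigma)\;\geq\;\rvol(\mathbb{S}^{k-1})\cdot\max_{p\in\Sigma}\Theta(\Sigma,p)\;\geq\;2\,\rvol(\mathbb{S}^{k-1}),
\]
as claimed. The only genuine content — and hence the only place where care is needed — is the density count: confirming that two distinct minimal sheets meeting at $p$ each have density at least $1$ and that these densities are additive at $p$. This is a standard local fact in the theory of minimal submanifolds, and it is precisely why the hypothesis requires the self-intersection to occur at an interior point $p\in\Sigma\setminus\partial_\infty\Sigma$, where the density is meaningful, rather than on the ideal boundary.
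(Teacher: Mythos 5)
Your proposal is correct and follows exactly the paper's route: the paper likewise observes that a self-intersection point $p$ in the interior has $\Theta(\Sigma,p)\geq 2$ and then plugs this into Theorem \ref{thm:density}. Your elaboration of the density count (two distinct minimal sheets, each of density at least $1$, contributing additively) is just a fleshed-out version of the same standard fact the paper invokes implicitly.
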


In this section, we introduce the M\"{o}bius volume of a submanifold in $B^n$ as in the Definition \ref{df:Min}.
\begin{df}
Let $\Sigma$ be a submanifold in the Poincar\'{e} ball model $B^n$. Let {M{\"{o}}b}($B^n$) be the group of all {M{\"{o}}bius}
transformations of $B^n$. The {\it M{\"{o}}bius volume} $\mvol(\Sigma)$ of $\Sigma$
is defined to be
\begin{equation*}
\mvol(\Sigma)=\sup\{{\vol}_\mathbb{R} (g(\Sigma))
\,|\, {g \in {\text{\rm M{\"{o}}b}}(B^n)}\}.
\end{equation*}
\end{df}
Note that since {M{\"{o}}b}($\mathbb{S}^{n-1}$) is isomorphic to {M{\"{o}}b}($B^n$), any element of {M{\"{o}}b}($\mathbb{S}^{n-1}$) can be considered as an element of {M{\"{o}}b}($B^n$). Using this concept, we obtain an isoperimetric inequality for any complete proper minimal submanifold in hyperbolic space with no assumption on $\Sigma$ unlike Corollary \ref{cor:isop ineq} and Corollary \ref{cor:isop origin}.
\begin{thm}
Let $\Sigma$ be a $k$-dimensional complete proper minimal submanifold in $B^n$. Then
\begin{equation}
k^k \omega_k \mvol(\Sigma)^{k-1} \leq \mvol(\partial_{\infty} \Sigma)^k. \label{ineq:weak mvol}
\end{equation}
\end{thm}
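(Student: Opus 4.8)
The plan is to prove the Möbius-volume isoperimetric inequality (\ref{ineq:weak mvol}) by reducing it, via a supremum argument, to the pointwise linear isoperimetric inequality already established in Theorem \ref{thm:linear isop}. The key observation is that the Möbius group of $B^n$ acts by isometries of hyperbolic space, and such isometries send complete proper minimal submanifolds to complete proper minimal submanifolds (minimality is an intrinsic notion preserved under ambient isometry, and properness is preserved since the action extends continuously to the closed ball). Hence every competitor $g(\Sigma)$ appearing in the definition of $\mvol(\Sigma)$ is itself a complete proper minimal submanifold to which Theorem \ref{thm:linear isop} applies.

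First I would fix an arbitrary $g \in \text{M\"ob}(B^n)$ and apply Theorem \ref{thm:linear isop} to the minimal submanifold $g(\Sigma)$, obtaining
\begin{equation*}
\rvol(g(\Sigma)) \leq \frac{1}{k}\, \rvol(\partial_\infty (g(\Sigma))).
\end{equation*}
Since $g$ extends to a Möbius transformation of $\mathbb{S}^{n-1} = \partial_\infty \mathbb{H}^n$ carrying the closure $\overline{\Sigma}$ onto $\overline{g(\Sigma)}$, the ideal boundary transforms compatibly, namely $\partial_\infty(g(\Sigma)) = g(\partial_\infty \Sigma)$. Therefore
\begin{equation*}
\rvol(g(\Sigma)) \leq \frac{1}{k}\, \rvol(g(\partial_\infty \Sigma)) \leq \frac{1}{k}\, \mvol(\partial_\infty \Sigma),
\end{equation*}
where the last step just bounds $\rvol(g(\partial_\infty \Sigma))$ by the supremum defining $\mvol(\partial_\infty \Sigma)$. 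Next I would take the supremum over all $g \in \text{M\"ob}(B^n)$ on the left-hand side; since the right-hand side is a fixed bound independent of $g$, this yields $\mvol(\Sigma) \leq \frac{1}{k}\,\mvol(\partial_\infty \Sigma)$. Finally, raising both sides to suitable powers exactly as in the proof of Corollary \ref{cor:isop ineq}, and using that $\mvol(\partial_\infty \Sigma) \geq \rvol(\mathbb{S}^{k-1}) = k\omega_k$ by Corollary \ref{cor:mvol}, gives
\begin{equation*}
k^k \omega_k\, \mvol(\Sigma)^{k-1} \leq k^k \omega_k\left(\tfrac{1}{k}\mvol(\partial_\infty \Sigma)\right)^{k-1} = k\omega_k\, \mvol(\partial_\infty \Sigma)^{k-1} \leq \mvol(\partial_\infty \Sigma)^k,
\end{equation*}
which is the desired inequality (\ref{ineq:weak mvol}).

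The main obstacle I anticipate is not the algebra but the compatibility of the ideal boundary and the Möbius volumes under the group action, which requires justifying two points: first, that $\partial_\infty(g(\Sigma)) = g(\partial_\infty \Sigma)$, i.e. that a Möbius transformation of $B^n$ restricts on $\partial B^n$ to the corresponding Möbius transformation of $\mathbb{S}^{n-1}$ and commutes with taking ideal boundaries (this uses the continuous extension of hyperbolic isometries to the sphere at infinity, as in \cite{Ratcliffe}); and second, that the supremum over $\text{M\"ob}(B^n)$ interacts correctly with the fixed-bound argument. A subtle point worth flagging is that one must use the \emph{full} Möbius volume $\mvol(\partial_\infty \Sigma)$ on the right rather than $\rvol(\partial_\infty g(\Sigma))$ for the specific $g$ realizing the supremum of $\mvol(\Sigma)$, since there is no reason a single $g$ simultaneously maximizes both quantities; this is precisely why (\ref{ineq:weak mvol}) is the natural Möbius-invariant statement and why, unlike Corollary \ref{cor:isop ineq} and Corollary \ref{cor:isop origin}, no hypothesis on $\Sigma$ (such as containing the origin) is needed—the supremum over the Möbius group automatically supplies the volume bound via Corollary \ref{cor:mvol}.
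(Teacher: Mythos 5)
Your proposal is correct and follows essentially the same route as the paper: apply Theorem \ref{thm:linear isop} to each Möbius image $g(\Sigma)$, pass to the supremum to get $\mvol(\Sigma) \leq \frac{1}{k}\mvol(\partial_\infty \Sigma)$, and combine with Corollary \ref{cor:mvol} via the same algebraic manipulation as in Corollary \ref{cor:isop ineq}. Your additional care in justifying that $g(\Sigma)$ remains a complete proper minimal submanifold and that $\partial_\infty(g(\Sigma)) = g(\partial_\infty\Sigma)$ only makes explicit what the paper leaves implicit.
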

\begin{proof}
From the linear isoperimetric inequality (\ref{ineq:linear isop}), it follows that for any isometry $\varphi$ of $\mathbb{H}^n$,
\begin{equation*}
\rvol(\varphi(\Sigma)) \leq \frac{1}{k}\rvol(\partial_{\infty} \varphi(\Sigma)).
\end{equation*}
Therefore by the definition of the M{\"{o}}bius volume
\begin{equation}
\mvol(\Sigma) \leq \frac{1}{k}\mvol(\partial_{\infty} \Sigma). \label{ineq:mobius volume}
\end{equation}
Corollary \ref{cor:mvol} and the inequality (\ref{ineq:mobius volume}) yield that
\begin{align*}
k^k \omega_k \mvol(\Sigma)^{k-1} &\leq k^k \omega_k \left( \frac{1}{k} \mvol(\partial_\infty \Sigma)\right)^{k-1}\\
&= k \omega_k \mvol(\partial_\infty \Sigma)^{k-1}\\
&= \rvol(\mathbb{S}^{k-1}) \mvol(\partial_\infty \Sigma)^{k-1}\\
&\leq \mvol(\partial_\infty \Sigma)^k,
\end{align*}
which completes the proof.

\end{proof}

\begin{rem}
{\rm We see that if $\Sigma$ is a $k$-dimensional complete totally geodesic submanifold in $B^n$, then equality holds in (\ref{ineq:weak mvol}). However we do not know whether the converse is true.
}
\end{rem}

\vskip 0.3cm
\noindent
{\bf Acknowledgment: }The authors would like to thank Professor Jaigyoung Choe for his valuable comments on this work.
%%%%%%%%%%%%%%%%%%%%
%%%%%%%%%%%%%%%%%%%%
%%%%%%%%%%%%%%%%%%%%
%\addcontentsline{toc}{section}{3 \hspace{0.09cm} References}

\vskip 1cm
\noindent Sung-Hong Min\\
Korea Institute for Advanced Study, 207-43 Cheongnyangni 2-Dong, Dongdaemun-Gu, Seoul 130-722, Korea\\
{\tt e-mail:shmin@kias.re.kr}\\

\bigskip
\noindent Keomkyo Seo\\
Department of Mathematics, Sookmyung Women's University, Hyochangwongil 52, Yongsan-ku, Seoul, 140-742, Korea\\
{\tt E-mail:kseo@sookmyung.ac.kr}\\
URL: http://sookmyung.ac.kr/$\thicksim$kseo

\begin{thebibliography}{123}
\bibitem{Allard} W. Allard, {\em On the first variation of a varifold}, Ann. of Math. (2) {\bf 95} (1972), 417-491.
\bibitem{Almgren} F. Almgren, {\em Optimal isoperimetric inequalities}, Indiana Univ. Math. J. {\bf 35} (1986), no. 3, 451-547.
\bibitem{Anderson} M.T. Anderson, {\em Complete minimal varieties in hyperbolic space}, Invent. Math. {\bf 69} (1982), no. 3, 477-494.
\bibitem{Anderson 1983} M.T. Anderson, {\em Complete minimal hypersurfaces in hyperbolic $n$-manifolds}, Comment. Math. Helv. {\bf 58} (1983), no. 2, 264-290.
\bibitem{Carleman} T. Carleman, {\em Zur Theorie der Minimalflachen}, Math. Z. {\bf 9} (1921), no. 1-2, 154-160.
\bibitem{Castillon} P. Castillon, {\em Submanifolds, isoperimetric inequalities and optimal transportation}, J. Funct. Anal. {\bf 259} (2010), no. 1, 79-103.
\bibitem{Choe90} J. Choe, {\em The isoperimetric inequality for a minimal surface with radially connected boundary}, Ann. Scuola Norm. Sup. Pisa Cl. Sci. (4) {\bf 17} (1990), no. 4, 583-593.
\bibitem{CG1} J. Choe, R. Gulliver, {\em The sharp isoperimetric inequality for minimal surfaces with radially connected boundary in hyperbolic space}, Invent. Math. {\bf 109} (1992), no. 3, 495-503.
\bibitem{CG2} J. Choe, R. Gulliver, {\em Isoperimetric inequalities on minimal submanifolds of space forms},
Manuscripta. Math. {\bf 77} (1992), 169-189.
\bibitem{CM} T.H. Colding, W.P. Minicozzi II, {\em A course in minimal surfaces}, Graduate Studies in Mathematics, {\bf 121}, American Mathematical Society, Providence, RI, 2011.
\bibitem{EWW} T. Ekholm, B. White, D. Wienholtz, {\em Embeddedness of minimal surfaces with total boundary curvature at most $4\pi$}, Ann. of Math. (2) {\bf 155} (2002), no. 1, 209-234.
\bibitem{Feinberg} J. Feinberg, {\em The isoperimetric inequality for doubly-connected minimal surfaces in $R^{n}$}, J. Analyse Math. {\bf 32} (1977), 249-278.
\bibitem{FS} A. Fraser, R. Schoen, {\em The first Steklov eigenvalue, conformal geometry, and minimal surfaces}, Adv. Math. {\bf 226} (2011), no. 5, 4011-4030.
\bibitem{HS} D. Hoffman and J. Spruck, {\em Sobolev and isoperimetric inequalities for Riemannian submanifolds}, Comm. Pure Appl. Math. {\bf 27} (1974), 715-727. A correction to:
��Sobolev and isoperimetric inequalities for Riemannian submanifolds��, Comm. Pure Appl. Math. {\bf 28} (1975), no. 6, 765-766.
\bibitem{LSY} P. Li, R. Schoen, S.-T. Yau, {\em On the isoperimetric inequality for minimal surfaces}, Ann. Scuola Norm. Sup. Pisa Cl. Sci. (4) {\bf 11} (1984), no. 2, 237-244.
\bibitem{LY} P. Li, S.-T. Yau, {\em A new conformal invariant and its applications to the Willmore conjecture and the first eigenvalue of compact surfaces}, Invent. Math. {\bf 69} (1982), no. 2, 269-291.
\bibitem{Lin invent} F.-H. Lin, {\em On the Dirichlet problem for minimal graphs in hyperbolic space}, Invent. Math. {\bf 96} (1989), no. 3, 593-612.
\bibitem{Lin CPAM} F.-H. Lin, {\em Asymptotic behavior of area-minimizing currents in hyperbolic space}, Comm. Pure Appl. Math. {\bf 42} (1989), no. 3, 229-242.
\bibitem{MS} J. Michael and L. M. Simon, {\em Sobolev and mean-value inequalities on generalized submanifolds of $\mathbb{R}^n$}, Comm. Pure. Appl. Math. {\bf 26} (1973), 361-379.
\bibitem{Min} S.-H. Min, {\em Embeddedness of proper minimal submanifolds in homogeneous spaces}, arXiv:1011.4140.
\bibitem{Osserman} R. Osserman, {\em The isoperimetric inequality}, Bull. Amer. Math. Soc. {\bf 84} (1978), no. 6, 1182-1238.
\bibitem{OS} R. Osserman, M. Schiffer, {\em Doubly-connected minimal surfaces}, Arch. Rational Mech. Anal. {\bf 58} (1975), 285-307.
\bibitem{Ratcliffe} J. G. Ratcliffe, {\em Foundations of hyperbolic manifolds}, 2nd. ed., Graduate Texts in Mathematics, {\bf 149}, Springer, New York, 2006.
\bibitem{Seo} K. Seo, {\em Isoperimetric inequalities for submanifolds with bounded mean curvature}, to appear in Monatshefte f\"{u}r Mathematik.
\bibitem{Simon} L. Simon, {\em Lectures on geometric measure theory}, Proceedings of the Centre for Mathematical Analysis, Australian National University, {\bf 3}, Australian National University, Centre for Mathematical Analysis, Canberra, 1983.
\bibitem{White} B. White, {\em Which ambient spaces admit isoperimetric inequalities for submanifolds?}, J. Differential Geom. {\bf 83} (2009), no. 1, 213-228.
\bibitem{Yau} S.-T. Yau, {\em Isoperimetric constants and the first eigenvalue of a compact manifold}, Ann. Sci. Ecole Norm. Sup. {\bf 8} (1975), 487-507.
\end{thebibliography}
\end{document}